\newtheoremstyle{ieeeconf}
{0pt}   % ABOVESPACE
{0pt}   % BELOWSPACE
{\normalfont}  % BODYFONT
{\parindent}       % INDENT (empty value is the same as 0pt)
{\itshape} % HEADFONT
{:}         % HEADPUNCT
{ } % HEADSPACE
{\thmname{#1} \thmnumber{#2}\thmnote{ (#3)}} % CUSTOM-HEAD-SPEC
\renewenvironment{proof}[1][\proofname]{\par
	\pushQED{\qed}%
	\normalfont \topsep\z@
	\trivlist
	\item[\hskip2em
	\itshape
	#1\@addpunct{:}]\ignorespaces
}{%
	\popQED\endtrivlist\@endpefalse
}
\theoremstyle{ieeeconf}
\newtheorem{thm}{Theorem}
\newtheorem{defn}{Definition}
\newtheorem{prop}{Proposition}
\newtheorem{lem}{Lemma}
\newtheorem{cor}{Corollary}
\newtheorem{ex}{Example}
\newcommand{\R}{\mathbb{R}}
\newcommand{\N}{\mathbb{N}}
\newcommand{\eps}{\varepsilon}
\title{\LARGE \bf
%Decomposable Dynamic Programming}
Existence of Partially Quadratic Lyapunov Functions That Can Certify The Local Asymptotic Stability of Nonlinear Systems}
\author{Morgan Jones,%
	\thanks{M. Jones is with the Department of Automatic Control and Systems Engineering,
		The University of Sheffield, Amy Johnson Building, Mappin Street, Sheffield, S1 3JD. e-mail: {\tt \small morgan.jones@sheffield.ac.uk}   }
	Matthew M. Peet% <-this % stops a space
	\thanks{M. Peet is with the School for the Engineering of Matter, Transport and Energy, Arizona State University, Tempe, AZ, 85298 USA. e-mail: {\tt \small mpeet@asu.edu } }
}
\begin{document}

\maketitle
\thispagestyle{plain}
\pagestyle{plain}
\begin{abstract}
This paper proposes a method for certifying the local asymptotic stability of a given nonlinear Ordinary Differential Equation (ODE) by using Sum-of-Squares (SOS) programming to search for a partially quadratic Lyapunov Function (LF). The proposed method is particularly well suited to the stability analysis of ODEs with high dimensional state spaces. This is due to the fact that partially quadratic LFs are parametrized by fewer decision variables when compared with general SOS LFs. The main contribution of this paper is using the Center Manifold Theorem to show that partially quadratic LFs that certify the local asymptotic stability of a given ODE exist under certain conditions. %A numerical example based on the Generalized Lotka-Volterrra ODE is given to demonstrate the computational savings of the proposed method.
\end{abstract}
\vspace{-0.25cm} \section{Introduction} \vspace{-0.2cm}
There is an abundance and diversity of applications found throughout science where a dynamical system is modelled as a nonlinear Ordinary Differential Equation (ODE). ODEs are at the core of many topics ranging from chaos theory, with the Lorenz equation~\cite{lorenz1963deterministic},  population dynamics~\cite{jansen2000local}, power systems~\cite{https://doi.org/10.48550/arxiv.2111.09382} and many more. Understanding the long term properties of solutions to general ODEs is therefore of critical importance. Arguably the most fundamental and sort after long term property is that of local stability. A system described by an ODE is said to be locally asymptotically stable if solutions initialized near an equilibrium point remain near this equilibrium point for all time and furthermore converge towards this equilibrium point as time increases.

This paper considers the following problem: Given an ODE and its equilibrium point, certify whether or not this ODE is locally asymptotically stable. To solve this problem we take the approach that is perhaps the most universally used technique, Lyapunov's second method. This method certifies the stability of ODEs by finding a function satisfying certain properties called a Lyapunov Function (LF). %Unfortunately, the problem of finding a LF to certify local asymptotic stability is often intractable~\cite{ahmadi2013stability}. 

%In recent years the problem of finding LFs has been viewed as a convex feasibility problem

%Linear ODEs, $\dot{x}(t)=Ax$, are asymptotically stable if and only if there exists a quadratic LF, $V(x)=x^\top Px$, for some $P>0$ that satisfies the following Linear Matrix Inequality (LMI), $A^\top P+ PA <0$. Unfortunately, for the more general case of locally asymptotically stable nonlinear ODEs, there does not necessarily exist a quadratic LF that can certify this stability. 

A common approach to numerically searching for LFs has been to use Sum-of-Square (SOS) programming~\cite{Declan2022}. Unfortunately, searching for SOS LFs is known to scale poorly with respect to the state space dimension of the system~\cite{ahmadi2019dsos}. One approach to improving the scalability of SOS has been to decompose large scale systems into lower dimensional subsystems. Several methods exist that show that if a suitable decomposition can be found then the  stability of the lower dimensional subsystems imply the stability of the original large scale system~\cite{kundu2015sum,anderson2011decomposition,schlosser2020sparse}. Unfortunately, these methods often lack generality assuming that the system has a certain structure that allows for such decompositions.

Recently there has been significant interest in improving the scalability of SOS methods by searching for ``separable" or ``structured" LFs of the form,
\vspace{-0.45cm} \[
V(x)=\sup_{1 \le i \le n} V_i(x_i) \quad  \text{ or } \quad  V(x)=\sum_{i=1}^n V_i(x_i) .
\]
Such separable LFs can be found in the works of~\cite{ito2012capability,ali2020computational,tan2008stability} and in the related reachable set computation problems~\cite{tacchi2020approximating,cibulka2021spatio}. These works demonstrate that searching for ``structured" LFs improve numerical performance. It has been shown in~\cite{rantzer2013separable} that monotone systems over compact state spaces posses max-separable LFs. However, in general, it is unknown for what class systems possess such ``structured" LFs.

Inspired by the works of \cite{Khalil_1996,haasdonk2021kernel} that use the Center Manifold Theorem to construct converse LFs with certain structure, in this paper we propose a new approach for certifying the local asymptotic stability of general high state space nonlinear ODEs by searching for partially quadratic LFs of the form,
\vspace{-0.3cm} \[
V(x_1,x_2)= J(x_1) + x_2^\top H(x_1) + x_2^\top P x_2,
 \]
 where $J:\R^k \to \R$, $H:\R^k \to \R^{(n-k)}$, $P\in \R^{(n-k) \times (n-k)}$ and $k \in \{1,\dots,n\}$. Such LFs are partially quadratic since a subset of the state space variables, $x_2 \in \R^{(n-k)}$, appear in $V$ with degree at most two. The main contribution of this paper is to provide several conditions under which it can be shown that partially quadratic LFs exist.
 
 %The paper is organized as follows. Notation is given in Section~\ref{sec: notation}. The definition of local asymptotic stability and Lyapunovs second method is detailed in Section~\ref{sec: ODE and soln map}. The background material on the center manifold theorem required to prove the existence of partially quadratic LFs is given in Section~\ref{sec: center manifold}. Conditions that imply the existence of partially quadtratic LFs are given in Section~\ref{sec: converse partiall quadratic LF}. Our proposed method for certifying the local stability of large scale systems is given in Section~\ref{sec: SOS problems}. Numerical examples are provided in Section~\ref{sec: numerical examples} and finally our conclusion is given in Section~\ref{sec:conclusion}
\vspace{-0.25cm} \section{Notation} \label{sec: notation} \vspace{-0.1cm}
We denote a ball with radius $R>0$ centred at the origin by $B_R(0)=\{x \in \R^n: x^\top x< R^2\}$. Let $C(X,Y)$ be the space of continuous functions with domain $X \subset \R^n$ and image $Y \subset \R^n$. We denote the set of differentiable functions by $C^i(X,Y):=\{f \in C(X,Y): \Pi_{k=1}^{n} \frac{\partial^{\alpha_k} f}{\partial x_k^{\alpha_k}}  \in C(X,Y) \text{ } \forall \alpha \in \N^n \text{ such that } \sum_{j=1}^{n} \alpha_j \le i\}$. For $V \in C^1(\R^n , \R^m)$ we denote $\nabla V$ as the $n\times m$ matrix function such that $(\nabla V(x))_{i,j}=\frac{\partial V_j}{\partial x_i}(x)$. For $d \in \N$ and $x \in \R^n$ we denote $z_d(x)$ to be the vector of monomial basis in $n$-dimensions with maximum degree $d \in \mathbb{N}$. We denote the space of scalar valued polynomials $p: \R^n \to \R$ with degree at most $d \in \N$ by $\R_d[x]$. We say $p \in \R_d[x]$ is a Sum-of-Squares (SOS) polynomial if there exists $p_i \in \R_d[x]$ such that $p(x) = \sum_{i=1}^{k} (p_i(x))^2$. We denote $\Sigma_{2d}$ to be the set of $2d$-degree SOS polynomials.
\vspace{-0.15cm} \section{ODEs and Solution Maps} \vspace{-0.1cm} \label{sec: ODE and soln map}
Consider a nonlinear Ordinary Differential Equation (ODE) of the form
\vspace{-0.65cm} \begin{equation} \label{eqn: ODE}
	\dot{x}(t) = f(x(t)),
\end{equation}
where $f: \R^n \to \R^n$ is the vector field. Note that, throughout this paper we will assume $f(0)=0$, implying the origin is an equilibrium point of the ODE~\eqref{eqn: ODE}.

\paragraph{The Solution Map of ODEs}
Given $D \subset \R^n$, and $I \subset [0, \infty)$ we say any function $\phi_f :D \times I \to \R^n$ satisfying
\vspace{-0.4cm}\begin{align} \label{soln map property}
	&\frac{\partial \phi_f(x,t)}{\partial t}= f(\phi_f(x,t)), \text{ } \phi_f(x,0)=x \text{ for } (x,t) \in D \times I,
\end{align}
is a solution map of the ODE~\eqref{eqn: ODE} over $D \times I$. For simplicity throughout the paper we will assume there exists a unique solution map to the ODE~\eqref{eqn: ODE} over all $(x,t) \in \R^n \times [0,\infty)$.  {Note, if the vector field, $f$, is Lipschitz continuous then the solution map exists for some finite time interval, furthermore, this finite time interval can be arbitrarily extended if the solution map does not leave some compact set, see~\cite{Khalil_1996}. }
%uniqueness and existence of a solution map sufficient for the purposes of this paper, can be established  such as for initial conditions inside some invariant set, like the Region of Attraction, and for all $t \ge 0$, can be shown to hold under minor smoothness assumption on $f$, see~\cite{Khalil_1996}.
\vspace{-0.35cm} \paragraph{Stability of Nonlinear ODEs}
We now use the solution map of the ODE~\eqref{eqn: ODE} to define several notions of stability.
   \begin{defn}  \label{defn: asym and exp stab}
	 {      The equilibrium point $x=0$ of ODE~\eqref{eqn: ODE} is,
		\begin{itemize}
			\item Stable if, for each $\eps>0$, there exists $\delta>0$ such that 
			\vspace{-0.25cm} \begin{align} \nonumber
				||\phi_f(x,t)||_2<\eps \text{ for all } x \in B_\delta(0) \text{ and } t \ge 0.  \end{align}
			\item Asymptotically stable if it is stable and there exists $\delta>0$ such that $\lim_{t \to \infty} ||\phi_f(x,t)||_2=0 \text{ for all } x \in B_\delta(0).$
			%   \begin{align} \nonumber
				%         \lim_{t \to \infty} ||\phi_f(x,t)||_2=0 \text{ for all } x \in B_\delta(0).
				%   \end{align}
			\item Exponentially stable if there exists $\lambda,\mu>0$ such that 
			\vspace{-0.2cm} \begin{align} \nonumber
				||\phi_f(x,t)||_2<\mu e^{-\lambda t} ||x||_2 \text{ for all } x \in B_\delta(0) \text{ and } t \ge 0.
			\end{align}
	\end{itemize}}
\end{defn}
%\begin{defn} \label{defn: asym and exp stab}
%	We say the set $U \subset \R^n$ is an asymptotically stable set of the ODE~\eqref{eqn: ODE} if:
%	\begin{enumerate}
%		\item $U$ contains a neighbourhood of the origin.
%		\item For any $x \in U$ we have that $\phi_f(x,t) \in U$ for all $t \in [0,\infty)$ and $\lim_{t \to \infty} ||\phi_f(x,t)||_2=0$.
%	\end{enumerate}
%	Furthermore, if there also exists $\delta, \mu>0$ such that for any $x \in U$ we have that
%	\begin{align} \label{eqn: exp stab}
%		||\phi_f(x,t)||_2 \le \mu e^{-\delta t} ||x||_2 \text{ for all } t \ge 0,
%	\end{align}
%	then we say $U\subset \R^n$ is an exponentially stable set of the ODE~\eqref{eqn: ODE}.
%\end{defn}
 {Given an ODE, if origin is an asymptotically stable equilibrium point of the ODE then we will say that the ODE is \textbf{locally asymptotically stable}. }

%We next define the Region of Attraction (ROA) of an ODE that can be thought of as the intersection of all asymptotically stable sets.
%\begin{defn} \label{defn: ROA}
%	The (Maximal) Region of Attraction (ROA) of the ODE~\eqref{eqn: ODE} is defined as the following set:
%	\begin{align} \label{eqn: ROA}
%		ROA_f:= \{x \in \R^n: \lim_{t \to \infty} ||\phi_f(x,t)||_2=0\}.
%	\end{align}
%\end{defn}
%The ROA of the ODE~\eqref{eqn: ODE} can be thought of as the ``maximal" asymptotically stable set. That is if $U \subset \R^n$ is an asymptotically stable set of the ODE~\eqref{eqn: ODE} then $U \subseteq ROA_f$. 

\paragraph{Certifying the Stability of Nonlinear ODEs}
 {In general there is no analytical expression for the solution map of a nonlinear ODE. Hence, directly certifying whether a nonlinear ODE is locally asymptotically stable by first finding the solution map, $\phi_f,$ and then showing $\lim_{t \to \infty} ||\phi_f(x,t)||_2 =0$ over some set $B_\delta(0)$ is challenging. Fortunately, there exists several methods that can certify the local asymptotic stability of an ODE without first finding the solution map. Arguably, the most important of these methods, that we now state next, are Lyapunov's first and second methods.}

%\paragraph{Lyapunov's First and Second Methods}
\begin{lem}[Lyapunov's First Method] \label{lem: LF first method}
	Consider an ODE~\eqref{eqn: ODE} defined by some vector field $f \in C^1(\R^n, \R^n)$ with $f(0)=0$. Let
\vspace{-0.55cm}	\begin{align} \label{eq: lineraization of f marix}
		A:=\begin{bmatrix} \frac{\partial f_1(0)}{\partial x_1} & \cdots  & \frac{\partial f_1(0)}{\partial x_n} \\
			\vdots & \ddots & \vdots\\
			\frac{\partial f_n(0)}{\partial x_1} & \cdots & \frac{\partial f_n(0)}{\partial x_n} \end{bmatrix} \in \R^{n \times n}.
	\end{align}
	It follows that,
	\begin{itemize}
		\item If all the real parts of the eigenvalues of $A$ are negative then the ODE is locally asymptotically stable.
		\item If there exists an eigenvalue of $A$ whose real part is positive then the ODE is \textbf{not} locally asymptotically stable. 
	\end{itemize}
\end{lem}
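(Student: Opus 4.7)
The plan is to prove both parts by reducing to Lyapunov's second method applied to carefully chosen quadratic forms, after writing the vector field as $f(x) = Ax + g(x)$ where $g(x) := f(x) - Ax$ is the nonlinear remainder. Since $f \in C^1(\R^n,\R^n)$ and $f(0) = 0$, Taylor's theorem gives $\|g(x)\|_2 = o(\|x\|_2)$ as $x \to 0$, which is the key ingredient that allows the linear part $A$ to dominate near the origin.

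For the first bullet, when all eigenvalues of $A$ have negative real part, the matrix $A$ is Hurwitz, so the Lyapunov matrix equation $A^\top P + PA = -I$ admits a unique solution $P \in \R^{n \times n}$ with $P = P^\top \succ 0$ (this is standard from linear systems theory). First I would propose $V(x) := x^\top P x$ as a candidate LF, which is clearly positive definite. Differentiating along trajectories yields $\nabla V(x)^\top f(x) = -x^\top x + 2 x^\top P g(x)$, and using $\|g(x)\|_2 \le \eps \|x\|_2$ on a sufficiently small ball $B_\delta(0)$ together with $|2x^\top P g(x)| \le 2\|P\|_2 \|x\|_2 \|g(x)\|_2$, I would pick $\eps < 1/(2\|P\|_2)$ so that the derivative is strictly negative on $B_\delta(0) \setminus \{0\}$. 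Local asymptotic stability then follows from Lyapunov's second method.

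For the second bullet, the approach I would take is to perform a real Jordan-type change of coordinates that block-diagonalizes $A$ into a part $A_u$ whose eigenvalues all have positive real part (nonempty by hypothesis) and a complementary part $A_c$ collecting the remaining spectrum, writing the state as $(x_u, x_c)$. On the unstable block I would solve $A_u^\top P_u + P_u A_u = Q_u$ for some $P_u, Q_u \succ 0$ (possible because $-A_u$ is Hurwitz, so the standard Lyapunov equation for $-A_u$ supplies such a $P_u$), and then use $W(x) := x_u^\top P_u x_u$ as a Chetaev-type function: it is positive on the cone $\{x_u \ne 0\}$, vanishes on its boundary, and its derivative along trajectories is $x_u^\top Q_u x_u$ plus an $o(\|x\|_2^2)$ term from $g$. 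On a small enough ball this derivative remains strictly positive wherever $W > 0$, so by Chetaev's instability theorem no trajectory starting arbitrarily close to the origin inside the cone $\{W > 0\}$ can stay close, contradicting stability and hence asymptotic stability.

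The main obstacle, as I see it, is the second bullet: constructing a function that is positive definite on only part of the state space and invoking an instability theorem requires more care than the straightforward quadratic Lyapunov argument of the first bullet. In particular, one must verify that the $o(\|x\|_2^2)$ cross terms arising from $g$ and from coupling between $x_u$ and $x_c$ do not overwhelm the dominant positive term $x_u^\top Q_u x_u$ near the boundary of the Chetaev cone; this is handled by shrinking the radius but must be stated carefully. Everything else reduces to citing standard linear algebra (solvability of the Lyapunov equation) and the classical Lyapunov/Chetaev theorems, both of which are available in \cite{Khalil_1996}.
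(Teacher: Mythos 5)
This lemma is quoted in the paper as classical background (Lyapunov's indirect method, cf.~\cite{Khalil_1996}) and is not proved there, so your attempt can only be judged on its own merits. Your argument for the first bullet is the standard one and is correct: write $f(x)=Ax+g(x)$ with $\|g(x)\|_2=o(\|x\|_2)$, solve $A^\top P+PA=-I$ with $P\succ 0$, and absorb the perturbation term $2x^\top P g(x)$ on a small enough ball.

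The second bullet, however, has a genuine gap, and it is precisely at the point you flag but then wave away. With $W(x)=x_u^\top P_u x_u$, the Chetaev region $\{W>0\}$ is all of $\{x_u\neq 0\}$ intersected with a ball, and on that region the error term is of size $o(\|x\|_2^2)$ in the \emph{full} norm, while the dominant term only gives $x_u^\top Q_u x_u\ge \lambda_{\min}(Q_u)\|x_u\|_2^2$. At points with $\|x_u\|_2\ll\|x_c\|_2$ (which exist at \emph{every} radius, so shrinking the ball does not help) the bound $|2x_u^\top P_u g_u(x)|\le 2\|P_u\|_2\,\eps\,\|x_u\|_2\|x\|_2$ can exceed $\lambda_{\min}(Q_u)\|x_u\|_2^2$, so the claim that $\dot W>0$ wherever $W>0$ on a small ball is simply false, and Chetaev's theorem cannot be invoked for this $W$. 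The classical repair is to take instead $V(x)=x_u^\top P_u x_u-x_c^\top P_c x_c$, so that on $\{V>0\}$ the component $x_c$ is controlled by $x_u$; moreover, since $A_c$ may have eigenvalues on the imaginary axis, one cannot make the $x_c$-block derivative sign-definite directly, and the standard trick is to pick $\delta>0$ smaller than the smallest positive real part in the spectrum and solve the Lyapunov equations for the shifted matrices $A_u-\tfrac{\delta}{2}I$ (anti-Hurwitz) and $A_c-\tfrac{\delta}{2}I$ (Hurwitz). This yields $\dot V\ge \delta V+\|x\|_2^2-o(\|x\|_2^2)>0$ on $\{V>0\}\cap B_r(0)$ for small $r$, after which Chetaev's theorem applies; this is the proof in~\cite{Khalil_1996}. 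Without this modification (or an equivalent cone argument restricting to $\|x_c\|_2\le c\|x_u\|_2$ and proving invariance of that cone), your instability argument does not go through.
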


From Lem.~\ref{lem: LF first method} we see that in the case when $A \in \R^{n \times n}$, given in Eq.~\eqref{eq: lineraization of f marix}, has an eigenvalue that  {is purely imaginative} we are unable to use Lyapunov's first method to certify whether the associated ODE is locally asymptotically stable or not. For this case we can still certify local asymptotic stability using Lyapunov's Second Method, stated next.

%\begin{lem}[Lyapunov's Second Method] \label{lem: LF subset of ROA}
%	Consider an ODE~\eqref{eqn: ODE} defined by some vector field $f \in C^1(\R^n, \R^n)$ with $f(0)=0$. Suppose there exits $V \in C^1(\R^n, \R)$ and a set $\Omega \subseteq \R^n$ with $0 \in \Omega$ such that
%	\begin{align} \label{eq: LF ineq 1}
%		& V(0) =0 \text{ and } V(x)   > 0 \text{ for all } x \in \Omega/\{0\}, \\  \label{eq: LF ineq 2}
%		& \nabla V(x)^\top f(x)  < 0 \text{ for all } x \in \Omega/\{0\}.
%	\end{align}
%	If $\gamma>0$ is such that $\{x\in \R^n: V(x) \le  \gamma\} \subseteq  \Omega$ then  {the ODE is locally asymptotically stable.} \textcolor{red}{[$\{x\in \R^n: V(x) \le \gamma\}$ is an asymptotically stable set (Def.~\ref{defn: asym and exp stab})]}.
%\end{lem}
%\paragraph{Converse Lyapunov Theory}
% {We next recall the converse of Lem.~\ref{lem: LF subset of ROA} that provides the necessary conditions for the existence of such a function $V \in C^1(\R^n, \R)$ satisfying Eqs~\eqref{eq: LF ineq 1} and~\eqref{eq: LF ineq 2}.}
\begin{thm}[Lyapunov's  Second Method~\cite{bacciotti2005liapunov}] \label{thm: converse LF}
Consider an ODE~\eqref{eqn: ODE} defined by some $f \in C^1(\R^n, \R^n)$ with $f(0)=0$. The ODE is locally asymptomatically stable if and only if there exists $R>0$ and $V\in C^1(\R^n, \R^n)$ that satisfies,
\vspace{-0.25cm} \begin{align} \label{eq: LF ineq 1}
&V(0)=0, \quad V(x)>0 \text{ for all } x \in {B_R(0)/\{0\}}, \\ \label{eq: LF ineq 2}
& \nabla V(x)^\top f(x) <0 \text{ for all } x \in B_R(0)/\{0\}.
\end{align}
\end{thm}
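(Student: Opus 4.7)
The plan is to prove the two directions of this equivalence separately. Sufficiency (existence of such $V$ implies local asymptotic stability) is a direct Lyapunov argument using sublevel sets, while necessity (the converse part) requires constructing $V$ explicitly from the solution map.

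For sufficiency, suppose $V$ satisfies the two stated conditions on $B_R(0)$. Given $\eps \in (0, R]$, I would let $m := \min_{\|x\|_2 = \eps} V(x)$, which is strictly positive by continuity of $V$ together with the positivity assumption on $B_R(0) \setminus \{0\}$. Continuity with $V(0)=0$ then yields $\delta > 0$ such that $V(x) < m$ for all $x \in B_\delta(0)$. For any initial condition $x \in B_\delta(0)$, the negative derivative condition forces $V(\phi_f(x,t))$ to be strictly decreasing in $t$, so the trajectory cannot cross the sphere of radius $\eps$ on which $V \ge m$; this establishes stability in the sense of Def.~1. Since $t \mapsto V(\phi_f(x,t))$ is decreasing and bounded below by $0$, it converges to some $V^* \ge 0$; assuming $V^* > 0$ would pin the trajectory inside a compact annulus on which $\nabla V(x)^\top f(x)$ is bounded above by a strictly negative constant, contradicting the finiteness of the limit and hence giving asymptotic attraction to $0$.

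For necessity, under the hypothesis of local asymptotic stability I would construct $V$ via a Massera-type integral,
\[
V(x) = \int_0^\infty g(\|\phi_f(x,t)\|_2) \, dt,
\]
where $g$ is a smooth, monotone, nonnegative function with $g(0)=0$ chosen so that the integral converges and retains enough smoothness. Positivity of $V$ on $B_R(0)\setminus\{0\}$ and $V(0)=0$ follow from the corresponding properties of $g$, while differentiating along the flow and using the group property $\phi_f(\phi_f(x,s),t) = \phi_f(x,s+t)$ gives $\nabla V(x)^\top f(x) = -g(\|x\|_2)$, supplying the required negative definiteness. Continuous differentiability of $V$ on a neighborhood of the origin would then follow from $C^1$ dependence of $\phi_f$ on initial conditions (which holds because $f \in C^1$), after which $V$ can be extended to all of $\R^n$ by multiplication with a smooth cutoff.

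The main obstacle will be ensuring that the improper integral defining $V$ converges and yields a genuinely $C^1$ function on a neighborhood of the origin, because asymptotic stability by itself supplies no a priori rate of decay of $\phi_f(x,t)$. I would address this by first upgrading pointwise attraction to a uniform one: continuous dependence of $\phi_f$ on $x$ combined with compactness of a small closed ball contained in the region of attraction implies that for every $\eta > 0$ there exists $T > 0$ with $\|\phi_f(x,t)\|_2 < \eta$ for all $x$ in that ball and $t \ge T$. With such uniform attraction in hand, choosing $g$ of compact support (or with sufficiently rapid decay near $0$ and a matching cutoff away from it) delivers both convergence of the integral and the smoothness needed to differentiate under the integral sign. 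If the construction only produces a continuous $V$, a subsequent mollification step, carried out so as to preserve the strict sign of $\nabla V^\top f$ on a slightly shrunken ball $B_{R'}(0)$, would complete the argument; alternatively one could appeal to Kurzweil's smoothing theorem to pass from a continuous converse Lyapunov function to a $C^1$ one.
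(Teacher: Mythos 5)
The paper itself offers no proof of this theorem: it is stated as a classical result with a citation to Bacciotti and Rosier, and is only invoked later (in the proof of Thm.~\ref{thm: seperable LF} and Cor.~\ref{cor: only one imaginary eigenvalue implies partiall quad LF}). So there is no in-paper argument to compare against, and your proposal should be judged as a standalone reconstruction of the cited result; on those terms it follows the standard route and is sound in outline. The sufficiency half is the classical sublevel-set argument and is essentially complete: the one step to spell out is that if $V(\phi_f(x,t))\to V^*>0$ then, since $V<V^*$ on some small ball around the origin by continuity and $V(0)=0$, the trajectory is confined to a compact annulus on which $\nabla V^\top f\le -c<0$, which drives $V$ below $V^*$ in finite time --- you state exactly this. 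The converse half correctly identifies the real difficulty (mere asymptotic stability supplies no decay rate, so the Massera integral need not converge, let alone be $C^1$) and the standard remedies: upgrade to uniform attraction on a compact ball inside the region of attraction via continuous dependence and compactness, then invoke Massera's lemma to pick $g$ so that both $\int_0^\infty g(\|\phi_f(x,t)\|_2)\,dt$ and its $x$-derivative converge locally uniformly (using the exponential-in-$t$ bound on $\|\partial\phi_f/\partial x\|$ that $f\in C^1$ gives on compact sets), with Kurzweil smoothing as a fallback. Two sketch-level wrinkles you should fix when writing this out: (i) a nonnegative, monotone increasing $g$ with $g(0)=0$ cannot have compact support, so the correct phrasing is a class-$\mathcal{K}$ function chosen sufficiently flat near zero, with any cutoff applied to $V$ itself when extending it from a neighborhood of the origin to all of $\R^n$, not to $g$; and (ii) the identity $\nabla V(x)^\top f(x)=-g(\|x\|_2)$ presupposes the differentiability you are trying to establish, so it must be obtained after justifying differentiation under the integral sign (which is precisely what the Massera choice of $g$ is for), rather than cited as motivation for it. Neither point breaks the approach; filling them in yields the proof that the paper delegates to its reference.
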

 {In the special case of asymptotically stable linear systems, $\dot{x}(t)=Ax(t)$, it is well known that there exists a quadratic LF, $V(x)=x^\top Px$ where $P>0$, and the Lyapunov condition of Thm.~\ref{thm: converse LF} reduces to the Matrix Equation~\eqref{eq: Lyapunov eq for linear systems} as shown in the next theorem.}
%
 %Theorem~\ref{thm: LF linear systems}, stated next, can be used to show that there exists quadratic Lyapunov equation if and only if the Matrix Equation~\eqref{eq: Lyapunov eq for linear systems} has a positive semidefinite matrix solution.
\begin{thm}[\cite{williams2007linear}] \label{thm: LF linear systems}
For any symmetric positive definite matrix $Q \in \R^{n \times n}$, the Lyapunov matrix equation,
\vspace{-0.2cm} \begin{align} \label{eq: Lyapunov eq for linear systems}
	A^\top P + P A=-Q,
\end{align}
has a unique symmetric positive definite solution $P \in \R^{n \times n}$ if every eigenvalue of $A \in \R^{n \times n}$ has strictly negative real part.
\end{thm}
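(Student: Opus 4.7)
The plan is to construct the solution $P$ explicitly as an infinite integral, then verify existence, positive definiteness, and uniqueness in turn. Specifically, I would define
\begin{align*}
P := \int_0^\infty e^{A^\top t} Q \, e^{A t} \, dt.
\end{align*}
The first step is to argue this integral converges. Since every eigenvalue of $A$ has strictly negative real part, a standard estimate (e.g.\ via the Jordan form of $A$) gives constants $M>0$ and $\alpha>0$ such that $\|e^{A t}\|_2 \le M e^{-\alpha t}$ for all $t \ge 0$. Combined with $\|Q\|_2 < \infty$, this yields an integrand bounded in norm by $M^2 \|Q\|_2 e^{-2\alpha t}$, guaranteeing absolute convergence entrywise.

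Next I would establish the algebraic properties of $P$. Symmetry is immediate because $Q$ is symmetric and $(e^{A^\top t} Q e^{A t})^\top = e^{A^\top t} Q e^{A t}$. Positive definiteness follows because for any nonzero $x \in \R^n$, the integrand $x^\top e^{A^\top t} Q e^{A t} x = (e^{A t} x)^\top Q (e^{A t} x)$ is strictly positive for every $t \ge 0$ (since $Q \succ 0$ and $e^{A t} x \ne 0$), so the integral is strictly positive. To verify that $P$ solves $A^\top P + P A = -Q$, I would use the identity $\frac{d}{dt} \bl e^{A^\top t} Q e^{A t} \br = A^\top e^{A^\top t} Q e^{A t} + e^{A^\top t} Q e^{A t} A$ and integrate both sides from $0$ to $\infty$; the left-hand side telescopes to $\lim_{t \to \infty} e^{A^\top t} Q e^{A t} - Q = -Q$ (the limit vanishing by the exponential bound), while the right-hand side equals $A^\top P + P A$.

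For uniqueness, suppose $P_1$ and $P_2$ are two symmetric positive definite solutions and let $\tilde P := P_1 - P_2$, so $A^\top \tilde P + \tilde P A = 0$. Then $\frac{d}{dt}\bl e^{A^\top t} \tilde P e^{A t}\br = e^{A^\top t}(A^\top \tilde P + \tilde P A) e^{A t} = 0$, so $e^{A^\top t} \tilde P e^{A t}$ is constant in $t$. Evaluating at $t = 0$ yields $\tilde P$; taking $t \to \infty$ and again using the exponential bound yields $0$. Hence $\tilde P = 0$ and $P_1 = P_2$.

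The main obstacle is really just the exponential decay bound $\|e^{A t}\|_2 \le M e^{-\alpha t}$, which underpins both convergence of the integral and the vanishing of the boundary terms at infinity; this is standard from the spectral decomposition of $A$, so the overall proof should be clean with no serious technical snag.
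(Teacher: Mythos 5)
Your proof is correct and complete: the paper itself states this result without proof, citing a standard linear systems reference, and your argument via the explicit construction $P=\int_0^\infty e^{A^\top t}Qe^{At}\,dt$, with the exponential bound $\|e^{At}\|_2\le Me^{-\alpha t}$ handling convergence, the boundary term, and uniqueness, is precisely the classical proof found in such references. Nothing further is needed.
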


%Theorem~\ref{thm: converse LF} shows that there exists a differentiable LF for locally asymptotically stable ODEs. Furthermore, in the special case of asymptotically stable linear systems, Theorem~\ref{thm: LF linear systems} can be used to show that there exists a quadratic LF. The goal of this paper is to bridge these two results by proposing conditions for which there exists a partially quadratic LF, a LF that is quadratic in a subset of the state variables. We achieve this goal later in Section~\ref{sec: converse partiall quadratic LF} Prop.~\ref{prop: seperable LF}, showing that there exists a partially quadratic LF based on the existence of a center manifold. Before we state and prove Prop.~\ref{prop: seperable LF} we first recall the necessary center manifold background theory in the next section.  

\paragraph{Coordinate changes for block diagonalization of linearization matrix} In order to state the main result in Thm.~\ref{thm: seperable LF}, that there exists a converse partially quadratic LF, we must first make a coordinate change to the ODE~\eqref{eqn: ODE}. This coordinate change will allow us to write the ODE as two coupled ODEs whose state variables will either appear quadratically or non-quadratically in our converse LF. 

Since we are concerned with certifying whether the ODE~\eqref{eqn: ODE} is locally stable, WLOG, we now assume that the associated linearization matrix, $A \in \R^{n \times n}$, given in Eq.~\eqref{eq: lineraization of f marix}, has $k \in \N$ purely imaginary eigenvalues and that the remaining eigenvalues of $A$ have negative real parts. We assume this WLOG because in the case where all of the eigenvalues of $A$ have negative real parts (i.e $k=0$) we can certify that the ODE is locally stable by Lem.~\ref{lem: LF first method}. Alternatively, if any of the eigenvalues of $A$ have positive real part then by Lem.~\ref{lem: LF first method} we can certify that the ODE is not locally asymptotically stable. In both of these cases there would be no need to find a LF.

Now, for a matrix, $A \in \R^{n \times n}$, that has eigenvalues that are either purely imaginary or have negative real parts, Lemma~\ref{lem: block diagonal matrix} (found in the Appendix) shows that there exists an invertible matrix $T \in \R^{n \times n}$ for which
\vspace{-0.4cm} \begin{align} \label{eq: block digaonal form}
	\text{ } \quad \qquad  TAT^{-1}= \begin{bmatrix} A_1 &   0 \\ 0   & A_2 \end{bmatrix} \in \R^{n \times n},
\end{align}
where $A_1$ has only purely imaginary eigenvalues and $A_2$ has eigenvalues with only negative real part.

Note that for any vector field $f$ with associated linearization matrix, $A \in \R^{n \times n}$, given in Eq.~\eqref{eq: lineraization of f marix}, it follows that $f(x)= Ax + \tilde{g}(x)$, where $\tilde{g}(x):= f(x) - Ax$ is such that $\frac{\partial}{\partial x_i} \tilde{g}(0)=0$.  Thus given an ODE~\eqref{eqn: ODE}, defined by a vector field $f$, WLOG we assume ${f}(x)= Ax + \tilde{g}(x)$ for some function $\tilde{g}$ such that $\frac{\partial}{\partial x_i} \tilde{g}(0)=0$. Then, by making the coordinate change $\begin{bmatrix}
	z_1 \\z_2
\end{bmatrix}=Tx$ to ODE~\eqref{eqn: ODE} we can consider the equivalent nonlinear ODE:
\begin{align} \label{ODE: zero real part}
	\dot{z_1}(t)   & =  A_1 z_1(t) + g_1(z_1(t),z_2(t))\\ \label{ODE: negative real part}
	\dot{z_2}(t) &= A_2 z_2(t) + g_2(z_1(t),z_2(t)),
\end{align}
where $A_1 \in \R^{k \times k}$ has purely imaginary eigenvalues, $A_2 \in \R^{(n-k) \times (n-k)}$ has eigenvalues with only negative real part, $g_1\in C^1( \R^k \times \R^{n-k} ,\R^k)$ is such that $\frac{\partial}{\partial x_i} g_1(0)=0$ for $i \in \{1,\dots,n\}$,  $g_2 \in C^1(\R^k \times \R^{n-k}, \R^{n-k})$ is such that $\frac{\partial}{\partial x_i} g_2(0)=0$ for $i \in \{1,\dots,n\}$, $k=\sum_{\lambda \in S}Dim \bigg(Ker(\lambda I - A)^{m(\lambda)} \bigg)$, $m(\lambda)$ is the algebraic multiplicity of eigenvalue $\lambda$, and $S \subset \mathbb{C}$ is the set of distinct eigenvalues of $A$ with zero real part.

%Prop.~\ref{prop: Stability of reduced system} shows that we can certify the stability of an ODE given by  Eqs.~\eqref{ODE: zero real part} and~\eqref{ODE: negative real part} by certifying the stability of an ODE with smaller state-space dimension, ODE~\eqref{ODE: reduced}. We would like to use Prop.~\ref{prop: Stability of reduced system}  to develop efficient numerical methods that certify the local asymptotic stability of high state space dimensional ODEs by solving lower dimensional problems. However, Prop.~\eqref{prop: Stability of reduced system} can not be directly used in this way since we do not have an analytical expression for the vector field of ODE~\eqref{ODE: reduced}; $\eta$ is characterized as the solution of nonlinear PDE~\eqref{PDE: eta} (which is challenging to solve). Therefore, in the next section we take an alternative approach by using Theorem~\ref{thm: The Center Manifold Theorem} and Prop.~\ref{prop: Stability of reduced system} to show the existence of a converse LF with a particular structure that can be efficiently searched for.

\vspace{-0.2cm}\section{Converse Partially Quadratic LFs} \vspace{-0cm} \label{sec: converse partiall quadratic LF}

 {We now use the Center Manifold Theorem (Thm.~\ref{thm: The Center Manifold Theorem} found in the Appendix) to prove the main result of the paper, Thm.~\ref{thm: seperable LF}, that shows that under certain conditions there exists a partially quadratic LF. Before stating Thm.~\ref{thm: seperable LF} we first give a preliminary result. This preliminary result shows that the conditions required in our main result are satisfied by some commonly encountered systems. Note that similar conditions appear in~\cite{atassi1999separation}}.

\begin{lem} \label{lem: conditions which ass holds}
 Consider an ODE~\eqref{eqn: ODE} defined by a vector field $f$. Suppose one or more of the following statements holds:
 % given by Eqs.~\eqref{ODE: zero real part} and~\eqref{ODE: negative real part} and their associated reduced ODE~\eqref{ODE: reduced}, defined by some vector field $f(y):=A_1 y+ g_1(y, \eta(x))$, where  $A_1 \in \R^{k \times k}$, $g_1 : \R^k \times \R^{(n-k)} \to \R^k$ and $\eta: \R^k \to \R^{(n-k)}$. Suppose one or more of the following statements holds:
%\begin{itemize}
%	\item ODE~\eqref{ODE: reduced} is locally Exponentially stable (Def.~\ref{defn: asym and exp stab}).
%	\item ODE~\eqref{ODE: reduced} is a gradient system. That is its vector field is of the form $f(y)=-\nabla V(y)$, where $V: \R^n \to \R$  is some function that satisfies $\nabla V(0)=0$, $V(y) \ge 0$ for all $x \in \R^n$ and $V(y)=0$ iff $x=0$.
%	\item ODE~\eqref{ODE: reduced} is locally asymptotically stable and one dimensional, that is $k=1$.
%\end{itemize}
\begin{itemize}
	\item The ODE is locally exponentially stable (Def.~\ref{defn: asym and exp stab}).
	\item The ODE is a gradient system. That is its vector field is of the form $f(y)=-\nabla V(y)$, where $V: \R^n \to \R$  is some function that satisfies $\nabla V(0)=0$, $V(y) \ge 0$ for all $x \in \R^n$ and $V(y)=0$ if and only if $x=0$.
	\item The ODE is locally asymptotically stable with a one dimensional state space.
\end{itemize}
Then there exists a radius, $R>0$, and a LF, $W \in C^1(\R^k, \R)$, satisfying
\vspace{-0.1cm}\begin{align} \label{ass: W of reduced ODE}
	&	W(y) \ge 0 \text{ for all } y \in B_R(0),\\ \nonumber
	&	W(y) =0 \text{ if and only if } y=0,\\ \nonumber
	&	\nabla W(y) \hspace{-0.05cm}  ^\top \hspace{-0.05cm}  f(y)   < \hspace{-0.1cm} -c_1 \alpha(||y||_2)^2  \hspace{-0.05cm}  \text{ for all } \hspace{-0.05cm}  y \hspace{-0.05cm}  \in  \hspace{-0.05cm}  B_R(0),\\ \nonumber
	&	||\nabla W(y)||_2 < c_2 \alpha(||y||_2)  \text{ for all } y \in B_R(0),
\end{align}
%where $c_1,c_2 \in [0,\infty)$, $\eta$ solves the PDE~\eqref{PDE: eta} (known to exist by Thm.~\ref{thm: The Center Manifold Theorem}), and $\alpha: [0, \infty) \to [0, \infty)$ is such that $\alpha(0)=0$.
where $c_1,c_2 \in [0,\infty)$ and $\alpha: [0, \infty) \to [0, \infty)$ is such that $\alpha(0)=0$. 
\end{lem}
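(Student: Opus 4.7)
The plan is to dispatch each of the three sufficient conditions individually, in each case exhibiting an explicit candidate for $W$ together with a radius $R$, constants $c_1,c_2$, and scaling function $\alpha$, and then reading off the four required inequalities.

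For the locally exponentially stable case, I would invoke the standard quadratic converse Lyapunov theorem for exponentially stable equilibria (see, e.g., the converse construction in~\cite{Khalil_1996}). This produces, on some ball $B_R(0)$, a $C^1$ function $W$ together with constants $a_1,\ldots,a_4>0$ satisfying $a_1\norm{y}_2^2\le W(y)\le a_2\norm{y}_2^2$, $\nabla W(y)^\top f(y)\le -a_3\norm{y}_2^2$, and $\norm{\nabla W(y)}_2\le a_4\norm{y}_2$. Choosing $\alpha(s)=s$, $c_1=a_3/2$, and $c_2=a_4+1$ then yields all four conclusions of the lemma, interpreted on the punctured ball where strict derivative and gradient inequalities are meaningful.

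For the gradient-system case I would take $W=V$ directly. The positivity and zero-set conditions coincide with the hypotheses on $V$. The dissipation identity $\nabla W(y)^\top f(y)=-\norm{\nabla V(y)}_2^2\le 0$ then yields the derivative inequality with $c_1=0$, provided $\nabla V$ has no non-origin zeros in the chosen ball. A Taylor expansion around the origin using $\nabla V(0)=0$ supplies $\norm{\nabla V(y)}_2\le K\norm{y}_2$ locally, so the gradient bound holds with $\alpha(s)=s$ and $c_2=K+1$. For the one-dimensional asymptotically stable case, uniqueness of solutions combined with asymptotic stability forces $y f(y)<0$ on a punctured neighborhood of $0$ (otherwise $f$ would vanish at some $y_0\ne 0$, and $y\equiv y_0$ would be an equilibrium in the domain of attraction, contradicting convergence). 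The quadratic candidate $W(y)=y^2/2$ then satisfies all four conditions with $\alpha(s)=s$, $c_1=0$, and $c_2=2$.

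The main obstacle I anticipate is in the gradient case: the three stated hypotheses on $V$ do not by themselves exclude a sequence of critical points of $V$ accumulating at the origin, and at any such critical point the dissipation $-\norm{\nabla V}_2^2$ vanishes, defeating the strict descent required of a Lyapunov function. Since such accumulating critical points would themselves obstruct asymptotic stability of the origin, one clean resolution is to augment the gradient hypothesis with local isolation of $0$ as a critical point of $V$ (e.g.\ by assuming real-analyticity of $V$, or nondegeneracy of $\nabla^2 V(0)$ so that Morse-Lemma reasoning isolates the origin in a suitable neighborhood). With that technicality dispatched, the remaining two cases are comparatively routine.
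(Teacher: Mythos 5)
Your overall case-by-case strategy matches the paper's, and the exponential-stability case is handled in essentially the same way (a converse quadratic Lyapunov theorem; the paper cites Corollary~77 of Vidyasagar rather than Khalil). The one-dimensional case is also fine as you argue it: the sign fact $y f(y)<0$ on a punctured neighbourhood is exactly what the paper needs, although the paper routes this case through the gradient case by setting $V(y)=-\int_0^y f(x)\,dx$ instead of taking $W(y)=y^2/2$.

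The real divergence is the gradient case, and the obstacle you flag there is self-inflicted rather than genuine: it comes from insisting on $\alpha(s)=s$. The lemma lets you choose $\alpha$, and the paper takes $W=V$ with $\alpha$ adapted to $V$ itself, namely $\alpha(\norm{y}_2)=\norm{\nabla V(y)}_2$ (a mild abuse, since this depends on $y$ and not only on its norm). With that choice the decrease condition is the identity $\nabla W(y)^\top f(y)=-\norm{\nabla V(y)}_2^2=-\alpha^2$ and the gradient bound is $\norm{\nabla W(y)}_2=\alpha$, so both hold with constants $c_1,c_2$ close to one on all of $B_R(0)$, with no need for isolated critical points, real-analyticity, or a nondegenerate Hessian; the only points where strictness fails are the zeros of $\nabla V$, which already include $y=0$, where no construction can meet the strict inequalities because $\alpha(0)=0$ --- so that caveat is inherent to the statement, not a defect of the proof, and your proposal to strengthen the hypotheses is unnecessary. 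Relatedly, your choice $c_1=0$ in the gradient and one-dimensional cases satisfies the letter of the lemma (the statement allows $c_1\ge 0$) but defeats its purpose: in the proof of Thm.~\ref{thm: seperable LF} one sets $\eps=\tfrac12\min\{2c_1/c_2^2,\,2/(1+4\lambda_{max})\}$, which must be positive, so a strictly positive $c_1$ is precisely what the lemma is there to supply. The paper's constructions all deliver this, and with $\alpha(s)=s$ it is sometimes impossible: for the reduced system $\dot y=-y^3$ of the illustrative example, $W=y^4/4$ with $\alpha(y)=y^3$ gives $c_1>0$, whereas any $W$ with $\norm{\nabla W(y)}_2\le c_2\abs{y}$ forces $\nabla W(y)^\top f(y)=O(y^4)$, so no $c_1>0$ can work with $\alpha(s)=s$. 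In short: keep your structure, but in the degenerate cases choose $\alpha$ to match the available decrease rate rather than fixing it to $\norm{y}_2$.
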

\begin{proof}
	Suppose the ODE is locally exponentially stable. Then by Corollary~77 from Page~245 in~\cite{vidyasagar2002nonlinear} there exists a LF that satisfies Eq.~\eqref{ass: W of reduced ODE} with $\alpha(y):=y$.
	
	Next, suppose the ODE is a gradient system. Then $W(y):=V(y)$ satisfies Eq.~\eqref{ass: W of reduced ODE} with $\alpha(y):=||\nabla V(y)||_2$.
	
	Finally, suppose the ODE has state space dimension equal to one. By defining $V(y)=-\int_0^y f(x) dx$ we see that the ODE is a gradient system. Hence, Eq.~\eqref{ass: W of reduced ODE} is satisfied.
\end{proof}

%Therefore, in order for there to exist a partially quadratic LF, as it is shown in Prop.~\ref{prop: seperable LF}, the ODE given in Eqs.~\eqref{ODE: zero real part} and~\eqref{ODE: negative real part} must be asymptotically stable.

%where $A_1 \in \R^{k \times k}$ has eigenvalues with only zero real part, $A_2 \in \R^{(n-k) \times (n-k)}$ has eigenvalues with only strictly negative real part, $g_1: \R^k \times \R^{n-k} \to \R^k$ is such that $\frac{\partial}{\partial x_i} g_1(0)=0$ for $i \in \{1,\dots,n\}$ and  $g_2: \R^k \times \R^{n-k} \to \R^{n-k}$ is such that $\frac{\partial}{\partial x_i} g_2(0)=0$ for $i \in \{1,\dots,n\}$. Suppose Assumption~\ref{ass: partially quadtic LF} holds

We now show that for the ODE given in Eqs.~\eqref{ODE: zero real part} and~\eqref{ODE: negative real part} if Eq.~\eqref{ass: W of reduced ODE} holds for the associated reduced ODE~\eqref{ODE: reduced} then there exists a partially quadratic LF of the form given in Eq.~\eqref{eq: Form of V} that can certify the local asymptotic stability of the ODE.
\begin{thm}[{Existence of converse partially quadratic LFs}] \label{thm: seperable LF}
 {Consider an ODE given by Eqs.~\eqref{ODE: zero real part} and~\eqref{ODE: negative real part} and the associated reduced ODE~\eqref{ODE: reduced}. Suppose there exists a function $W \in C^1(\R^k, \R)$ satisfying Eq.~\eqref{ass: W of reduced ODE} for the vector field $f$ of the reduced ODE~\eqref{ODE: reduced}. Then, the ODE given by Eqs.~\eqref{ODE: zero real part} and~\eqref{ODE: negative real part} is locally asymptotically stable if and only if there exists a matrix $P>0$, a scalar $R>0$ and functions $J \in C^1 (\R^k , \R)$ and $H \in C^1 (\R^k , \R^{(n-k)})$ such that
\begin{align} \label{LFineq}
PA_2 + A_2^\top P =-I,
\end{align}
and
\vspace{-0.2cm} \begin{align} \label{V: greater than 0 away from the origin}
	&	V(z_1,z_2) > 0 \text{ for all } (z_1,z_2) \in B_R (0)/ \{0\},\\ \label{V:  0 at 0}
	&	V(0,0) =0 , \\ \label{V: decreasing along traj}
	&	\nabla V(z_1,z_2)^\top \begin{bmatrix}
		A_1 z_1 + g_1(z_1,z_2)\\
		A_2 z_2 + g_2(z_1,z_2)
	\end{bmatrix} < 0 \\ \nonumber
	& \hspace{3cm} \text{ for all } (z_1,z_2) \in B_R(0)/\{0\}.
\end{align}
\vspace{-0.75cm}\begin{align} \label{eq: Form of V}
\text{where }	V(z_1,z_2):= J(z_1) + z_2^\top H(z_1) + z_2^\top P z_2.
\end{align} }
%where $PA_2 + A_2^\top P =-I$, $J \in C^1 (\R^k , \R)$ and $H \in C^1 (\R^k , \R^{(n-k)})$, such that, 
%\begin{align} \label{V: greater than 0 away from the origin}
%&	V(z_1,z_2) > 0 \text{ for all } (z_1,z_2) \in B_R (0)/ \{0\},\\ \label{V:  0 at 0}
%&	V(0,0) =0 , \\ \label{V: decreasing along traj}
%&	\nabla V(z_1,z_2)^\top \begin{bmatrix}
%		A_1 z_1 + g_1(z_1,z_2)\\
%		A_2 z_2 + g_2(z_1,z_2)
%	\end{bmatrix} < 0 \\ \nonumber
%& \hspace{3cm} \text{ for all } (z_1,z_2) \in B_R(0)/\{0\}.
%\end{align}
\end{thm}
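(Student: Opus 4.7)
The plan is to split the biconditional into its two directions. The forward implication---that a $V$ of the form \eqref{eq: Form of V} satisfying \eqref{V: greater than 0 away from the origin}--\eqref{V: decreasing along traj} yields local asymptotic stability---is immediate from Lyapunov's second method (Thm.~\ref{thm: converse LF}), since those listed conditions on $V$ are exactly \eqref{eq: LF ineq 1}--\eqref{eq: LF ineq 2}. Essentially all of the work lies in the converse: constructing $J$, $H$, and $P$ explicitly.

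For the converse, I plan to build $V$ from $W$ together with the center manifold of \eqref{ODE: zero real part}--\eqref{ODE: negative real part}. Since $A_1$ has purely imaginary spectrum and $A_2$ is Hurwitz, the Center Manifold Theorem supplies a $C^1$ local center manifold $z_2 = h(z_1)$ with $h(0)=0$ and $Dh(0)=0$, satisfying the invariance identity
\begin{equation*}
  Dh(z_1)\bl A_1 z_1+g_1(z_1,h(z_1))\br = A_2 h(z_1)+g_2(z_1,h(z_1)),
\end{equation*}
and whose restricted dynamics coincide with the reduced vector field of \eqref{ODE: reduced}, to which the hypothesised $W$ applies. Let $P>0$ be the unique symmetric positive-definite solution of $PA_2+A_2^\top P = -I$ furnished by Thm.~\ref{thm: LF linear systems}, and take the candidate
\begin{equation*}
  V(z_1,z_2) := W(z_1) + (z_2-h(z_1))^\top P(z_2-h(z_1)).
\end{equation*}
Expanding the quadratic in $z_2$ shows that $V$ has the required form \eqref{eq: Form of V} with $J(z_1)=W(z_1)+h(z_1)^\top P h(z_1)$ and $H(z_1)=-2 P h(z_1)$, both $C^1$ by the regularity of $W$ and $h$. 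Properties \eqref{V: greater than 0 away from the origin}--\eqref{V:  0 at 0} then follow at once from $W\ge 0$ with $W(y)=0$ iff $y=0$, from $h(0)=0$, and from $P>0$: when $z_1\ne 0$ positivity comes from $W(z_1)>0$, and when $z_1=0$ it comes from $V(0,z_2)=z_2^\top P z_2$.

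The hard part will be verifying the strict decrease condition \eqref{V: decreasing along traj}. Introducing the transverse coordinate $\zeta := z_2-h(z_1)$ and using the invariance identity to eliminate the drift in $\dot\zeta$, a direct computation gives
\begin{equation*}
\dot V = \nabla W(z_1)^\top\bl A_1 z_1+g_1(z_1,h(z_1))\br + \nabla W(z_1)^\top \tilde g_1 + 2\zeta^\top P A_2\zeta + 2\zeta^\top P \bl \tilde g_2 - Dh(z_1)\tilde g_1\br,
\end{equation*}
with $\tilde g_i(z_1,\zeta) := g_i(z_1,h(z_1)+\zeta) - g_i(z_1,h(z_1))$. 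The first summand is bounded above by $-c_1\alpha(\norm{z_1}_2)^2$ by \eqref{ass: W of reduced ODE}, and the third collapses to $-\norm{\zeta}_2^2$ by the choice of $P$. Since $\frac{\partial g_i}{\partial x_j}(0)=0$ and $Dh(0)=0$, mean-value estimates on a ball $B_R(0)$ deliver $\norm{\tilde g_i(z_1,\zeta)}_2 \le \eps(R)\norm{\zeta}_2$ and $\norm{Dh(z_1)}_2 \le \eps(R)$ with $\eps(R)\to 0$ as $R\to 0$. Combined with $\norm{\nabla W(z_1)}_2 \le c_2\alpha(\norm{z_1}_2)$, Young's inequality absorbs the remaining cross term via $c_2\eps(R)\alpha(\norm{z_1}_2)\norm{\zeta}_2 \le \tfrac{c_1}{2}\alpha(\norm{z_1}_2)^2 + \tfrac{c_2^2\eps(R)^2}{2c_1}\norm{\zeta}_2^2$, and the residual $\norm{\zeta}_2^2$ contributions carry coefficients proportional to $\eps(R)$. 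Shrinking $R$ produces $\dot V \le -\tfrac{c_1}{2}\alpha(\norm{z_1}_2)^2 - \tfrac{1}{2}\norm{\zeta}_2^2 < 0$ on $B_R(0)\setminus\{0\}$ in the $c_1>0$ cases supplied by Lem.~\ref{lem: conditions which ass holds}. The principal obstacle is therefore not the choice of $V$ but the absorption of these cross-terms: the invariance identity for $h$ is what eliminates an otherwise $O(\norm{z_1}_2)$ drift in $\dot\zeta$, and $Dh(0)=0$ is what keeps the $\zeta^\top P\, Dh(z_1)\,\tilde g_1$ term small enough to be controllable.
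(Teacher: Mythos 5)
Your proposal is correct and follows essentially the same route as the paper: both directions are handled identically, with the converse built from the candidate $V(z_1,z_2)=W(z_1)+(z_2-\eta(z_1))^\top P(z_2-\eta(z_1))$ using the center manifold, the Lyapunov equation $PA_2+A_2^\top P=-I$, and small-Lipschitz estimates on $g_1,g_2$ near the origin absorbed by Young's inequality. If anything, your explicit retention and control of the $2\zeta^\top P\,Dh(z_1)\,\tilde g_1$ cross-term (via $Dh(0)=0$) is slightly more careful than the paper's own computation, which passes over that residual term when invoking the invariance identity.
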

%\textcolor{red}{for local stability dont we need a subset inside $B_\delta(0)$?}
\vspace{-0.1cm} \begin{proof}
	 {First suppose that there exists a matrix $P>0$, a scalar $R>0$ and functions $J \in C^1 (\R^k , \R)$ and $H \in C^1 (\R^k , \R^{(n-k)})$ such that Eqs.~\eqref{LFineq}, \eqref{V: greater than 0 away from the origin}, \eqref{V:  0 at 0} and~\eqref{V: decreasing along traj} hold, where $V$ is given by Eq.~\eqref{eq: Form of V}. Now, it follows that $V$ is a LF for the ODE given by Eqs.~\eqref{ODE: zero real part} and~\eqref{ODE: negative real part} and hence this ODE is locally asymptotically stable by Thm.~\ref{thm: converse LF}.}
	
 {On the other hand let us now suppose the ODE given by  Eqs.~\eqref{ODE: zero real part} and~\eqref{ODE: negative real part} is locally asymptotically stable.} Consider the following function,
\vspace{-0.2cm}	\begin{align*}
V(z_1,z_2)= W(z_1) + (z_2 - \eta(z_1))^\top P (z_2 - \eta(z_1)),
	\end{align*}
where $W$ satisfies Eq.~\eqref{ass: W of reduced ODE} for some radius $R_1>0$ and for the vector field of the reduced ODE~\eqref{ODE: reduced}, given by $f(y)=A_1 y(t) + g_1(y(t), \eta(y(t)))$, where $\eta$  satisfies PDE~\eqref{PDE: eta} for some radius $R_2>0$ (known to exist by Thm.~\ref{thm: The Center Manifold Theorem}), and $P>0$ is such that 
\vspace{-0.4cm}  \begin{align} \label{pfeq: LMI}
	PA_2 + A_2^\top P =-I.
\end{align}
Note that such a $P>0$ exists by Thm.~\ref{thm: LF linear systems} since $A_2$ is defined in Eq.~\eqref{ODE: negative real part} to have eigenvalues with only negative real part.

Now, it clearly follows by multiplying out the quadratic terms in $V$ that,
	\begin{align} \label{pfeq: V}
	V(z_1,z_2) \hspace{-0.05cm} = \hspace{-0.05cm}  W(z_1)  \hspace{-0.05cm}  + \hspace{-0.05cm}  \eta(z_1)^\top \hspace{-0.05cm}  P \eta(z_1) \hspace{-0.05cm}   - \hspace{-0.05cm}  2 z_2^\top P \eta(z_1) \hspace{-0.05cm}  + \hspace{-0.05cm}  z_2^\top P z_2.
	\end{align}
	Hence, $V$ satisfies Eq.~\eqref{eq: Form of V} with $J(z_1)=W(z_1) + \eta(z_1)^\top P \eta(z_1)$ and $H(z_1)= -2P\eta(z_1)$.
	
	We next show that $V$ satisfies Eqs.~\eqref{V: greater than 0 away from the origin} and~\eqref{V:  0 at 0}. The function $V$ comprises of the sum of two positive terms and thus it is clear $V(z_1,z_2) \ge 0$ for all $(z_1,z_2) \in B_\delta(0)$. Clearly $V(z_1,z_2)=0$ if and only if both of these positive terms are zero. Now, $W(z_1)=0$ if and only if $z_1=0$ and $(z_2 - \eta(z_1))^\top P (z_2 - \eta(z_1))=0$ if and only if $z_2=\eta(z_1)$. If $z_1=0$ and $z_2=\eta(z_1)$ then $z_2=\eta(0)=0$ (note that $\eta(0)=0$ by Theorem~\ref{thm: The Center Manifold Theorem}). Therefore $V(z_1,z_2)=0$ if and only if  $(z_1,z_2)=0$.
	
We next show that $V$ satisfies Eq.~\eqref{V: decreasing along traj}. First note that $g_1$ and $g_2$ defined in Eqs.~\eqref{ODE: zero real part} and~\eqref{ODE: negative real part} are such that $\nabla g_1(0,0)=0$ and $\nabla g_2(0,0)=0$. Then by Lem.~\ref{lem: Lip bound} (found in the Appendix) it follows that for $\eps:=\frac{1}{2}\min\{\frac{2 c_1}{c_2^2}, \frac{2}{1+ 4 \lambda_{max}}  \}>0 $, where $\lambda_{max}>0$ is the largest eigenvalue of $P>0$, there exists $R_3>0$ such that 
\begin{align} \label{pfeq: g1 g2 lip}
&	||g_1(u_1,u_2) - g_1(v_1,v_2) ||_2< \eps ||(u_1,u_2)- (v_1,v_2)||_2\\ \nonumber
	&	||g_2(u_1,u_2) - g_2(v_1,v_2) ||_2< \eps ||(u_1,u_2)- (v_1,v_2)||_2\\ \nonumber
		& \hspace{2cm} \text{ for all } (u_1,u_2),(u_1,u_2) \in B_{R_3}(0).
\end{align}

It now follows from application of Eq.~\eqref{pfeq: g1 g2 lip} that,
\vspace{-0.2cm} 	\begin{align} \label{pfeq: Vdot less than 0}
	&	\nabla V(z_1,z_2)^\top \begin{bmatrix}
			A_1 z_1 + g_1(z_1,z_2)\\
			A_2 z_2 + g_2(z_1,z_2)
			\end{bmatrix}\\ \nonumber
		&=\nabla W(z_1)^\top (A_1 z_1 + g_1(z_1,z_2)) \\  \nonumber
		& \quad -2 (z_2 - \eta(z_1))^\top P \nabla \eta (z_1)^\top (A_1 z_1 + g_1(z_1,z_2) )  \\  \nonumber
		& \quad + 2 (z_2 - \eta(z_1))^\top P (A_2 z_2 + g_2(z_1,z_2) )\\  \nonumber
		&= \nabla W(z_1)^\top (A_1 z_1 + g_1(z_1,\eta(z_1))) \\  \nonumber
		& \quad + \nabla W(z_1)^\top (g_1(z_1,z_2) - g_1(z_1,\eta(z_1))) \\  \nonumber
			& \quad -2 (z_2 - \eta(z_1))^\top P (A_2 \eta (z_1) + g_2(z_1,\eta(z_1))) \\  \nonumber
			& \quad + (z_2-\eta(z_1))^\top (PA_2 + A_2^\top P) (z_2-\eta(z_1)) \\  \nonumber
			& \quad + 2(z_2-\eta(z_1))^\top  P (A_2 \eta(z_1)  + g_2(z_1,z_2))\\  \nonumber
			& < \hspace{-0.05cm} -c_1 \alpha(||z_1||_2)^2 \hspace{-0.05cm} + \hspace{-0.05cm} ||\nabla \hspace{-0.02cm} W(\hspace{-0.05cm} z_1 \hspace{-0.05cm} )||_2 \hspace{-0.04cm} ||g_1(z_1,z_2) \hspace{-0.05cm} - \hspace{-0.05cm}g_1(z_1,\eta( \hspace{-0.05cm} z_1 \hspace{-0.05cm} ))||_2\\  \nonumber
			& \quad \hspace{-0.2cm}  - \hspace{-0.1cm} ||z_2 \hspace{-0.1cm} - \hspace{-0.1cm} \eta( \hspace{-0.05cm} z_1 \hspace{-0.05cm} )||_2^2 \hspace{-0.05cm} + \hspace{-0.05cm} 2(z_2 \hspace{-0.1cm} - \hspace{-0.1cm} \eta(z_1))^\top \hspace{-0.1cm}  P  (g_2(z_1 \hspace{-0.05cm} ,z_2) \hspace{-0.05cm} - \hspace{-0.05cm} g_2(z_1, \hspace{-0.05cm} \eta( \hspace{-0.05cm} z_1 \hspace{-0.05cm} )))\\  \nonumber
			& \le \hspace{-0.05cm}  - \hspace{-0.05cm} c_1 \alpha(||z_1||_2)^2 \hspace{-0.05cm} + \hspace{-0.05cm}  \bigg( \hspace{-0.1cm} c_2 \sqrt{\eps} \alpha(||z_1||_2) \hspace{-0.1cm}  \bigg) \bigg(\sqrt{\eps} || z_2 - \eta(z_1)||_2\bigg)\\  \nonumber
			& \quad   -  ||z_2-\eta(z_1)||_2^2 +2\eps \lambda_{max}||z_2-\eta(z_1)||_2^2\\  \nonumber
			& \le\hspace{-0.1cm}  -  \hspace{-0.1cm}  \left( \hspace{-0.1cm}  c_1 \hspace{-0.1cm} - \hspace{-0.05cm}  \frac{c_2^2 \eps}{2} \hspace{-0.1cm}  \right) \hspace{-0.1cm}   \alpha(||z_1||_2)^2 \hspace{-0.1cm} - \hspace{-0.1cm}  \left( \hspace{-0.1cm} 1 \hspace{-0.05cm} -  \hspace{-0.05cm} \frac{\eps(1+4 \lambda_{max})}{2} \hspace{-0.1cm} \right) \hspace{-0.1cm} ||z_2 \hspace{-0.1cm} - \hspace{-0.05cm} \eta( \hspace{-0.05cm} z_1 \hspace{-0.05cm} )||_2^2 \\  \nonumber
			&<0 \text{ for all } (z_1,z_2) \in B_R(0)/\{0\},
	\end{align}
where $R=\min\{R_1,R_2,R_3\}>0$ and $\lambda_{max}>0$ is the largest eigenvalue of $P>0$.

The second equality from Eq.~\eqref{pfeq: Vdot less than 0} follows from the application of PDE~\eqref{PDE: eta}, found in Thm.~\ref{thm: The Center Manifold Theorem} from the Appendix. The first inequality of Eq.~\eqref{pfeq: Vdot less than 0} follows from Eqs.~\eqref{ass: W of reduced ODE} and~\eqref{pfeq: LMI} and the Cauchy Schwarz inequality. The second inequality of Eq.~\eqref{pfeq: Vdot less than 0} follows from Eq.~\eqref{pfeq: g1 g2 lip}. The third inequality of Eq.~\eqref{pfeq: Vdot less than 0} using the inequality $xy \le \frac{x^2 + y^2}{2}$ for all $x,y \in \R$. The fourth and final inequality in Eq.~\eqref{pfeq: Vdot less than 0} follows since $\eps:=\frac{1}{2}\min\{\frac{2 c_1}{c_2^2}, \frac{2}{1+ 4 \lambda_{max}}  \}$ and hence $c_1 - \frac{c_2^2 \eps}{2}>0$ and $1 - \frac{\eps(1+4 \lambda_{max})}{2}>0$.

Therefore it follows that $V$, given in Eq.~\eqref{pfeq: V}, satisfies Eqs.~\eqref{V: greater than 0 away from the origin},~\eqref{V:  0 at 0} and~\eqref{V: decreasing along traj} for $R\hspace{-0.1cm} = \hspace{-0.1cm}  \min\{R_1,R_2,R_3\} \hspace{-0.1cm} > \hspace{-0.1cm} 0$.
	\end{proof}
 {If there exists a function $W$ satisfying Eq.~\eqref{ass: W of reduced ODE} for the reduced ODE~\eqref{ODE: reduced} then Thm.~\ref{thm: seperable LF} shows that the ODE given in Eqs.~\eqref{ODE: zero real part} and~\eqref{ODE: negative real part} is locally asymptotically stable if and only if there exists a partially quadratic LF. Further to this Lem.~\ref{lem: conditions which ass holds} provides some sufficient conditions that guarantee the existence of such a function $W$. In the next corollary we combine these results to show the existence of partially quadratic LFs for systems whose linearization matrix has only one purely imaginary eigenvalue. This corollary provides the theoretical justification for the search of partially quadratic LFs to certify local asymptotic stability in our numerical examples in Sec.~\ref{sec: numerical examples}.   }
\begin{cor} \label{cor: only one imaginary eigenvalue implies partiall quad LF}
 {Consider an ODE~\eqref{eqn: ODE} with associated linearization matrix $A \in \R^{n \times n}$ defined in Eq.~\eqref{eq: lineraization of f marix}. Suppose there is a single purely imaginary eigenvalue of $A$ and all other eigenvalues of $A$ have negative real parts. Then the ODE is stable if and only if there exists a partially quadratic LF of the form given in Eq.~\eqref{eq: Form of V} that satisfies Eqs~\eqref{V: greater than 0 away from the origin}, \eqref{V:  0 at 0} and~\eqref{V: decreasing along traj}.}
\end{cor}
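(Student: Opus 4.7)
The plan is to reduce the corollary to an application of Theorem \ref{thm: seperable LF}, with the hypothesis on $W$ verified via the third bullet of Lemma \ref{lem: conditions which ass holds}. The key observation is that since $A \in \R^{n \times n}$ is a real matrix, any non-real purely imaginary eigenvalue must come in a conjugate pair. Therefore, the assumption that $A$ has a \emph{single} purely imaginary eigenvalue forces that eigenvalue to be $0$ and, in the notation of Section \ref{sec: ODE and soln map}, forces the dimension of the center subspace to be $k=1$. So after the block-diagonalizing coordinate change \eqref{eq: block digaonal form}, the ODE takes the form of Eqs.~\eqref{ODE: zero real part}--\eqref{ODE: negative real part} with $A_1 \in \R^{1 \times 1}$ (in fact $A_1 = 0$) and $A_2 \in \R^{(n-1) \times (n-1)}$ Hurwitz.

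For the converse (``if'') direction, suppose a $V$ of the form \eqref{eq: Form of V} exists satisfying \eqref{V: greater than 0 away from the origin}--\eqref{V: decreasing along traj}. Then $V$ is a classical Lyapunov function on $B_R(0)$ for the transformed system, so Theorem \ref{thm: converse LF} immediately yields local asymptotic stability of the transformed ODE, which is equivalent to that of the original ODE since the coordinate change $\bmat{z_1 \\ z_2} = Tx$ is a linear isomorphism.

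For the forward direction, suppose the ODE is locally asymptotically stable. First I would invoke the Reduction Principle contained in the Center Manifold Theorem (Thm.~\ref{thm: The Center Manifold Theorem} in the Appendix): local asymptotic stability of the full system \eqref{ODE: zero real part}--\eqref{ODE: negative real part} is equivalent to local asymptotic stability of the associated reduced ODE~\eqref{ODE: reduced}. Since $k=1$, this reduced ODE has a one-dimensional state space and is locally asymptotically stable, so the third bullet of Lemma \ref{lem: conditions which ass holds} applies, producing a $W \in C^1(\R, \R)$ that satisfies \eqref{ass: W of reduced ODE} for the vector field of the reduced ODE. All hypotheses of Theorem \ref{thm: seperable LF} are now in place, so there exist $P>0$, $R>0$, $J \in C^1(\R,\R)$ and $H \in C^1(\R, \R^{n-1})$ such that the partially quadratic function \eqref{eq: Form of V} satisfies \eqref{LFineq}--\eqref{V: decreasing along traj}, completing the proof.

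The main conceptual point (rather than a computational obstacle) is justifying that the $k=1$ case of the block diagonalization genuinely applies and that stability of the reduced ODE follows from stability of the full ODE; both are standard consequences of center manifold theory, but they are what make the chain ``local asymptotic stability $\Rightarrow$ reduced ODE is 1-D and locally asymptotically stable $\Rightarrow$ Lemma \ref{lem: conditions which ass holds} gives $W$ $\Rightarrow$ Theorem \ref{thm: seperable LF} gives partially quadratic $V$'' work. No new technical estimates are needed beyond those already developed in Theorem \ref{thm: seperable LF}.
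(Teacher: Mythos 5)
Your argument is correct and follows essentially the same route as the paper's own (one-line) proof: the single purely imaginary eigenvalue forces $k=1$, the reduction principle gives local asymptotic stability of the one-dimensional reduced ODE~\eqref{ODE: reduced}, the third bullet of Lem.~\ref{lem: conditions which ass holds} supplies $W$, Thm.~\ref{thm: seperable LF} yields the partially quadratic LF, and Thm.~\ref{thm: converse LF} handles the ``if'' direction. The only cosmetic discrepancy is that the reduction principle you attribute to the Center Manifold Theorem (Thm.~\ref{thm: The Center Manifold Theorem}) is stated in the paper as Prop.~\ref{prop: Stability of reduced system}, which is the reference the paper's proof actually cites.
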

\begin{proof}
		 {Follows using Prop.~\ref{prop: Stability of reduced system}, Lem.~\ref{lem: conditions which ass holds} and Thm.~\ref{thm: converse LF}}.
\end{proof}
 {Note, the proof of the existence of partially quadratic LFs given in Thm.~\ref{thm: seperable LF} is non-constructive, being based on the center manifold, $z_2=\eta(z_1)$, for which in general there is no analytical formula. In the special case where the center manifold is analytically known the proof becomes constructive. In the following illustrative example we use Eq.~\eqref{pfeq: V} to construct a partially quadratic LF without any computation. Later, in Sec.~\ref{sec: SOS problems} we will consider high dimensional systems for which the center manifold is not known and hence such a LF cannot be constructively found. For such systems we will use numerical methods to search for partially quadratic LFs to certify local asymptotic stability.}

%However, the knowledge that there exists partially quadratic LFs is useful because later in Section~\ref{sec: SOS problems} it allows us to propose efficient numerical algorithms to search for such partially quadratic LFs. 

\paragraph{An Illustrative Example}  Consider the following ODE,
\vspace{-0.7cm}  \begin{align} \label{ODE: illistrative example}
	\dot{x}_1(t) & = -x_1(t)x_2(t)\\ \nonumber
	\dot{x}_2(t) & = - x_2(t) + x_1(t)^2 - 2 x_2(t)^2.
\end{align}
The associated linearization matrix, found in Eq.~\eqref{eq: lineraization of f marix}, is given by $A=\begin{bmatrix} 0 & 0 \\ 0 & -1 \end{bmatrix}$. The matrix $A \in \R^{2 \times 2}$ has eigenvalues $0$ and $-1$ and thus ODE~\eqref{ODE: illistrative example} cannot be certified as locally asymptotically stable using Lyapunov's first method (Lem.~\ref{lem: LF first method}). 

Note that without any coordinate transformations ODE~\eqref{ODE: illistrative example} is already in the form of ODE given by Eqs.~\eqref{ODE: zero real part} and~\eqref{ODE: negative real part} with $A_1=0$ and $A_2=-1$. Thus setting $P=0.5>0$ it follows that $PA_2 + A_2^\top P =-I$. It is shown in~\cite{roberts1997low} that $\eta(y)=y^2$ gives the center manifold. Hence, the reduced ODE~\eqref{ODE: reduced}  associated with ODE~\eqref{ODE: illistrative example} is,
\vspace{-0.15cm} \begin{align} \label{ODE: illistrative example reduced}
	\dot{y}(t) & = -y(t)^3.
\end{align}
ODE~\eqref{ODE: illistrative example reduced} has a one dimensional state space so by Lem.~\ref{lem: conditions which ass holds} it follows that there exists a function $W$ satisfying Eq.~\eqref{ass: W of reduced ODE}. Specifically, if we let $W(y)= \frac{y^4}{4}$ it can be shown $W$ satisfies Eq.~\eqref{ass: W of reduced ODE} with $\alpha(y)=y^3$. The proof of Thm.~\ref{thm: seperable LF} then shows that the function given in Eq.~\eqref{pfeq: V} is a LF. For this ODE this then implies that the following function is a LF to ODE~\eqref{ODE: illistrative example},
\vspace{-0.3cm} \begin{align} \label{V: illistrative example}
V(x_1,x_2)= x_1^4 /4 + 0.5 (x_2 - x_1^2)^2.
\end{align}
Clearly, the LF given in Eq.~\eqref{V: illistrative example} is partially quadratic since the $x_2$ terms appear with degree at most $2$ while the $x_1$ terms can have degree greater $2$. We have plotted the largest set of initial conditions that this LF can certify as asymptotically stable as the green region in Fig.~\ref{fig: Illistrative example}. %The figure also demonstrates how trajectories of the ODE tend towards the center manifold before converging to the equilibrium point at the origin. 

\begin{figure*}[h!]%[t!] \centering
	\begin{subfigure}[t]{ 0.333 \textwidth} 
		\includegraphics[width=\linewidth, trim = {1.75cm 1cm 1cm 1cm}, clip]{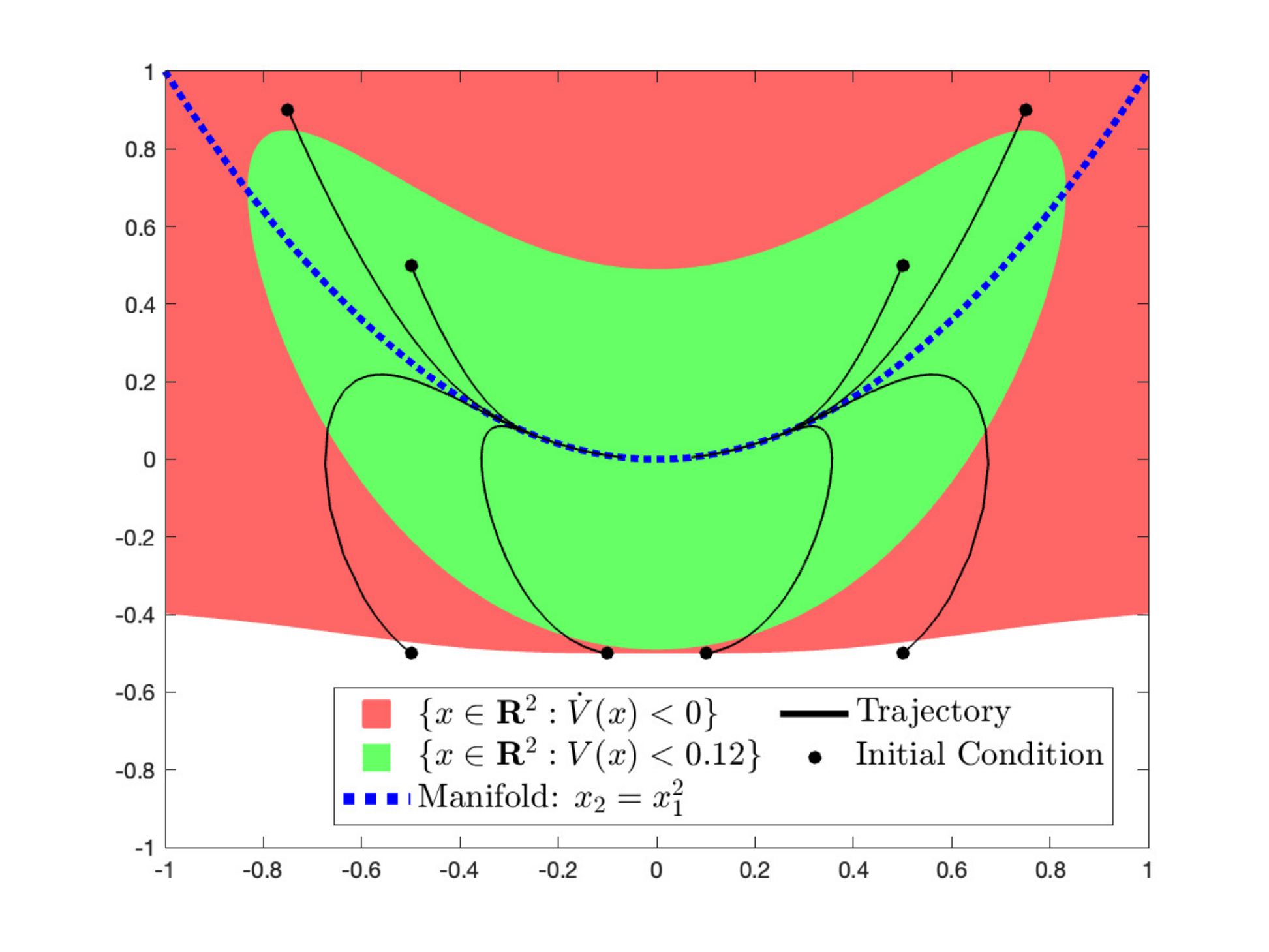}
	%	\vspace{-10pt}
	\caption{}
		%\caption{\footnotesize Graph showing that the LF given in Eq.~\eqref{V: illistrative example} certifies the local stability of ODE~\eqref{ODE: illistrative example} according to Lem.~\ref{lem: LF subset of ROA}. The center manifold, $y=x^2$, is also plotted as the dotted blue line. Several trajectories of the ODE for various initial conditions are plotted as the black curves. }
\label{fig: Illistrative example}
	\end{subfigure}%
	\begin{subfigure}[t]{0.333 \textwidth}
		\includegraphics[width=\linewidth, trim = {1cm 0.5cm 0.5cm 0.75cm}, clip]{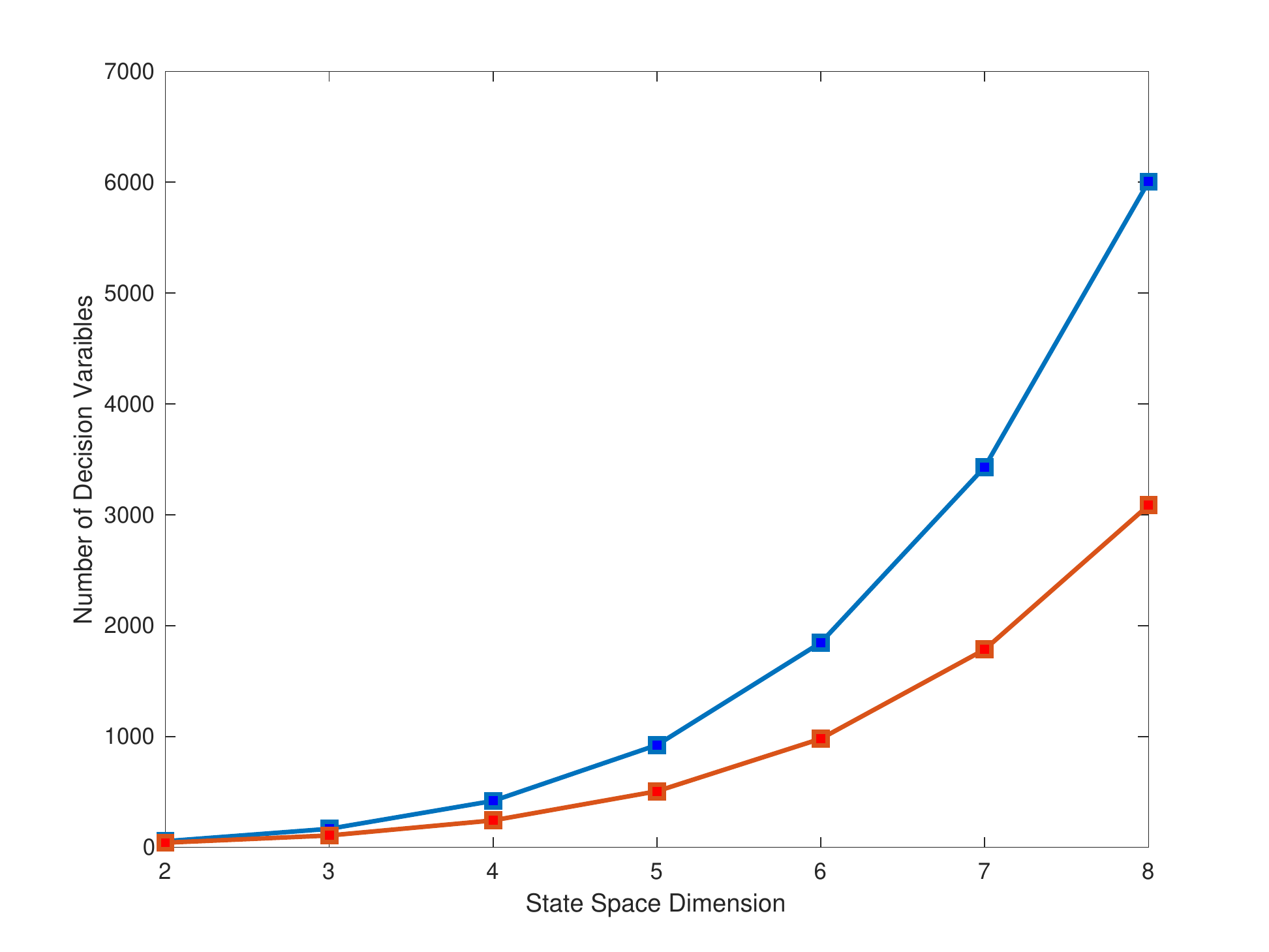}
	%	\vspace{-10pt}
		\caption{}
		%\caption{\footnotesize Graph showing that the number of decision variables in the underlying SDP problem of Opt.~\eqref{opt: Full local stab SOS}, plotted as the blue curve, is larger than that of Opt.~\eqref{opt: reduced sos} for ODE~\eqref{ODE: Lotka }, plotted as the red curve. }
	\label{fig: lokta}
	\end{subfigure}%
	\begin{subfigure}[t]{ 0.333 \textwidth}
			\includegraphics[width=\linewidth, trim = {1cm 0.5cm 0.5cm 0.65cm}, clip]{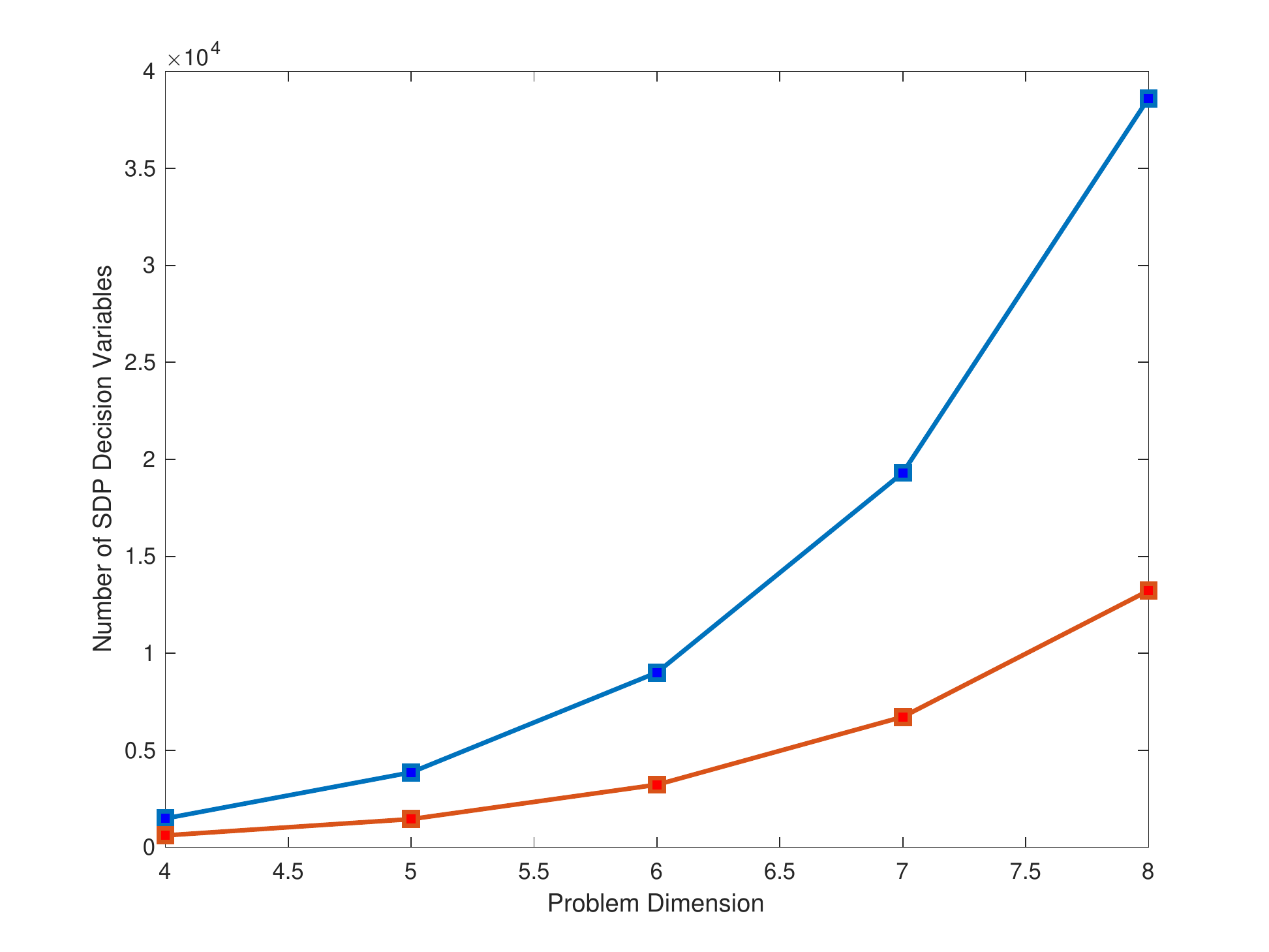}
%	\vspace{-10pt}
\caption{}
%	\caption{ \footnotesize Graph showing that the number of decision variables in the underlying SDP problem of Opt.~\eqref{opt: Full local stab SOS}, plotted as the blue curve, is larger than that of Opt.~\eqref{opt: reduced sos} for ODE~\eqref{ODE: Lotka }, plotted as the red curve. }
	\label{fig: coupled linear ODE}
\end{subfigure}%
	\vspace{-10pt}
	\caption{\footnotesize (\protect\subref{fig: Illistrative example}) Graph showing that the LF given in Eq.~\eqref{V: illistrative example} certifies the local stability of ODE~\eqref{ODE: illistrative example}. The center manifold, $y=x^2$, is also plotted as the dotted blue line. Several trajectories of the ODE for various initial conditions are plotted as the black curves.
(\protect\subref{fig: lokta}) 	Graph showing that for ODE~\eqref{ODE: Lotka } the number of decision variables in the underlying SDP problem of Opt.~\eqref{opt: Full local stab SOS}, plotted as the blue curve, is larger than that of Opt.~\eqref{opt: reduced sos}, plotted as the red curve.
(\protect\subref{fig: coupled linear ODE}) 	Graph showing that for ODE~\eqref{ODE: coupled linear} the number of decision variables in the underlying SDP problem of Opt.~\eqref{opt: Full local stab SOS}, plotted as the blue curve, is larger than that of Opt.~\eqref{opt: reduced sos}, plotted as the red curve.}
%	\label{figs}
	\vspace{-15pt}
\end{figure*}

%\begin{figure}
%	\includegraphics[scale=0.5, trim = {1.5cm 0 0 0cm}, clip]{lokta}
%	\vspace{-30pt}
%	\caption{Graph showing that the number of decision variables in the underlying SDP problem of Opt.~\eqref{opt: Full local stab SOS}, plotted as the blue curve, is larger than that of Opt.~\eqref{opt: reduced sos} for ODE~\eqref{ODE: Lotka }, plotted as the red curve. }
%	\vspace{-20pt} \label{fig: lokta}
%\end{figure}
%\begin{figure}
%	\includegraphics[scale=0.5, trim = {1.75cm 0 0 0cm}, clip]{ill_plot}
%	\vspace{-35pt}
%\caption{Graph showing that the LF given in Eq.~\eqref{V: illistrative example} certifies the local stability of ODE~\eqref{ODE: illistrative example} according to Lem.~\ref{lem: LF subset of ROA}. The center manifold, $y=x^2$, is also plotted as the dotted blue line. Several trajectories of the ODE for various initial conditions are plotted as the black curves. }
%	\vspace{-20pt} \label{fig: Illistrative example}
%\end{figure}

\section{Using SOS to Certify Local Stability} \label{sec: SOS problems}
\vspace{-0.1cm}
Consider the problem of certifying the local stability of an ODE~\eqref{eqn: ODE}, defined by some vector field $f$. This problem can be solved by using Lyapunov's second method (Thm.~\ref{thm: converse LF}). In cases where the vector field, $f$, is polynomial we can search for such a LF using SOS programming~\cite{Declan2022,James2015Lyapunov}. We can find such LFs by solving the following $2d$-degree SOS feasibility problem:
%\begin{align} \label{opt: Full local stab SOS}
%\text{Find: } & V \in \mathbb{R}_d[x] \text{ and }  s_1,s_2,s_3 \in \Sigma_{2d}  \text{ such that, }\\ \nonumber
% & V(0) =0  \\ \nonumber 
%& V(x) - s_0(x)(R- ||x||_2^2) = s_1(x) \text{ for all } x \in \R^n \\ \nonumber
% & -\nabla V(x)^\top f(x) - s_2(x)(R- ||x||_2^2) = s_3(x) \text{ for all } x \in \R^n,
%\end{align}
\vspace{-0.2cm} \begin{align} \nonumber
&\text{Find: }  V \in \mathbb{R}_{2d}[x],  s_1,s_2,s_3 \in \Sigma_{2d}   \text{ such that, }\\ \label{opt: Full local stab SOS}
 & V(0) =0,  \quad V(x) = s_1(x) + \eps  x^\top x \text{ for } x \in \R^n, \\ \nonumber
 & -\nabla V(x)^\top f(x) - s_2(x)(R^2- ||x||_2^2) = s_3(x) \text{ for } x \in \R^n,
\end{align}
where $R>0$ and $\eps>0$. Note that $R>0$ is included in Opt.~\eqref{opt: Full local stab SOS} so we only enforce $V$ to be a LF locally (over the ball $B_R(0)$). Also note that $\eps>0$ is included in Opt.~\eqref{opt: Full local stab SOS} to avoid the trivial solution $V(x) \equiv 0$. Typically $R>0$ and $\eps>0$ are selected to be small, for instance $R=\eps=0.1$.

If ODE~\eqref{eqn: ODE} is of the form given  in Eqs.~\eqref{ODE: zero real part} and~\eqref{ODE: negative real part} then Thm.~\ref{thm: seperable LF} indicates that we can certify local stability by searching for a partially quadratic LF of the form given in Eq.~\eqref{eq: Form of V}. This motivates the following $2d$-degree SOS feasibility problem:
\vspace{-0.25cm} \begin{align} \nonumber
&	\text{Find: }  J \in \mathbb{R}_{2d}[x_1], H_i \in \mathbb{R}_{2d}[x_1], P \in \R^{n \times n}, s_1,s_2,s_3 \in \Sigma_{2d}  \\ \label{opt: reduced sos}
 & \text{such that, } V(0,0) =0, \hspace{0.1cm} \\ \nonumber 
 & V(x_1,x_2) =s_1(x_1,x_2) + \eps (x_1,x_2)^\top (x_1,x_2) \text{ for } x \in \R^n, \\ \nonumber
	& -\nabla V(x_1,x_2)^\top  \hspace{-0.1cm}  \begin{bmatrix}
		A_1 x_1 + g_1(x_1,x_2)\\
		A_2 x_2 + g_2(x_1,x_2)
	\end{bmatrix}\\ \nonumber 
& \hspace{0.4cm} - s_2(x_1,x_2)(R^2- ||(x_1,x_2)||_2^2) = s_3(x_1,x_2) \text{ for } x \in \R^n,
\end{align}
where $ \hspace{-0.1cm} V  \hspace{-0.05cm}  (  \hspace{-0.05cm}  x_1,  \hspace{-0.05cm}  x_2  \hspace{-0.05cm}  ) \hspace{-0.1cm} =  \hspace{-0.1cm}  J \hspace{-0.05cm}( \hspace{-0.05cm} x_1 \hspace{-0.05cm})  \hspace{-0.1cm}  +  \hspace{-0.1cm}  x_2^T \hspace{-0.15cm} \begin{bmatrix}  H_1(x_1) \\ \vdots \\ H_{n-k}(x_1) \end{bmatrix}  \hspace{-0.2cm}  +  \hspace{-0.1cm}   x_2^\top \hspace{-0.1cm} P  \hspace{-0.05cm} x_2  $, $\hspace{-0.05cm} \eps \hspace{-0.1cm} > \hspace{-0.1cm} 0 \hspace{-0.05cm}$ and $ \hspace{-0.05cm}R \hspace{-0.1cm} >\hspace{-0.1cm}0$.

 {Note, Opt.~\eqref{opt: reduced sos} can certify the local asymptotic stability of ODEs of the form given in Eqs.~\eqref{ODE: zero real part} and~\eqref{ODE: negative real part}. General ODEs can be converted to be of this form using a coordinate change given in Eq.~\eqref{eq: block digaonal form}. This coordinate change can be numerically found using  Matlab functions \texttt{jordan} and \texttt{cdf2rdf}.}
 
 Searching for partially quadratic LFs by solving Opt.~\eqref{opt: reduced sos} as opposed to searching for fully non-quadratic LFs by solving Opt.~\eqref{opt: Full local stab SOS} results in computational savings due to the reduction in decision variables. These computational savings will be demonstrated through several numerical examples in the next section.
\vspace{-0.3cm}
\section{ {Numerical Examples}} \vspace{-0.1cm} \label{sec: numerical examples}
%We next numerically demonstrate the potential computational savings of our proposed method outlined in Section~\ref{sec: SOS problems}.
\begin{ex}[The Generalized Lotka–Volterra equations]
The competition of different groups (species, resources, etc) can be modelled by the following ODE,
\vspace{-0.2cm} \begin{align} \label{ODE: Lotka }
	&\dot{x}_i(t)=x_i(t) g(x(t)), \\ \nonumber
	& \text{ where } g(x)= r+ Bx, \quad  r \in \R^n  \text{ and } B \in R^{n \times n}.
\end{align}
%The size of group $i \in \{1,\dots,n\}$ at time $t \ge 0$ is denoted as $x_i(t) \in \R$. If $r_i>0$ then the size of group $i$ grows in the absence of other groups (for instance plants). If $r_i<0$ then the size of group $i$ decreases in the absence of other groups (for instance predators). If $r_i=0$ then the size group $i$ remains constant in the absence of others (for instance a closed eco system or some stored resource). The value of $B_{i,j}$ provides a metric on the impact group $j$ has on group $i$. The ODE~\eqref{ODE: Lotka } has two equilibrium points at $0 \in \R^n$ and $-B^{-1}r \in \R^n$. The equilibrium point at $0 \in \R^n$ corresponds to an extinction level event where all groups have size 0.

%Let us consider randomly generated values for $r$ and $B$ where $r_1=0$ and $r_i<0$ for all $i \in \{2,\dots,n\}$. Such a situation could occur where group $x_1$ represents a resource, like the population of trees in an ancient woodland, that remains fairly constant in the absence of the outside world. The other groups could represent factions of humans that would like to increase the woodland (when $B_{1,j}>0$) or remove the woodland (when $B_{1,j}<0$). In the Absence of other groups both fractions of  humans decrease ($r_i<0$) since there is no one left to fight with or any resource to fight over. We are interested in analysing the stability of the extinction level event ($x=0$).

 {Clearly,  ODE~\eqref{ODE: Lotka } has an equilibrium point at $0 \in \R^n$. The corresponding linearization matrix $A \in \R^{n \times n}$, given in Eq.~\eqref{eq: lineraization of f marix}, is such that $A_{i,j}=\begin{cases}
	r_i \text{ if } i=j\\
	0 \text{ otherwise. }
\end{cases}$ Let us consider randomly generated values for $r$ and $B$ where $r_1=0$ and $r_i<0$ for all $i \in \{2,\dots,n\}$. Hence $A$ has eigenvalues $\{r_1,\dots, r_n\}$ which in this case are either purely imaginative or have negative real part. Thus we are unable to  determine the stability of $x=0$ by Lyapunov's first method. Cor.~\ref{cor: only one imaginary eigenvalue implies partiall quad LF} shows this system is asymptotically stable iff there exists a partially quadratic LF. Solving Opts.~\eqref{opt: Full local stab SOS} and~\eqref{opt: reduced sos} for $n=2$ to $8$ at $d=6$, $R=0.01$ and $\eps=0.00001$, allows us to find feasible LFs in each case.  In Fig.~\ref{fig: lokta} we have plotted the difference in the number of decision variables associated with each optimization problem. For $n=8$ it took Yalmip~\cite{lofberg2004yalmip} and Mosek {213s} to solve Opt.~\eqref{opt: Full local stab SOS} and  157s to solve Opt.~\eqref{opt: reduced sos}.}
\end{ex}

\begin{ex}[{Stable linear systems with 	nonlinear interconnection}]
Let us consider the following ODE system,
\vspace{-0.2cm} \begin{align} \label{ODE: coupled linear}
	\dot{z}_1(t)\hspace{-0.1cm} = \hspace{-0.1cm} g(z_1(t), \hspace{-0.05cm}  z_2(t),  \hspace{-0.05cm} z_3(t)),  \\ \nonumber 
	\dot{z}_2(t) \hspace{-0.1cm}  = \hspace{-0.1cm}  Q_1 z_2(t),  \quad   \dot{z}_3(t) \hspace{-0.1cm}  = \hspace{-0.1cm}  Q_2 z_3(t),
\end{align}
where $g: \R \times \R^n \times \R^n \to \R$ is such that $\nabla g(0,0,0)=0$ and where all the eigenvalues of $Q_1 \in \R^{n_1 \times n_1}$ and $Q_2\in \R^{n_2 \times n_2}$ have negative real part. The linearization matrix, given in Eq.~\eqref{eq: lineraization of f marix}, for this system is then $A:=\begin{bmatrix} 0&0  & 0\\ 0 & Q_1 &   0 \\ 0 &0  & Q_2 \end{bmatrix}.$
This matrix is already in the block diagonal form of Eq.~\eqref{eq: block digaonal form} with $A_1=0$ and $A_2=\begin{bmatrix}
	Q_1 & 0 \\ 0 & Q_2
\end{bmatrix}$. Since $A_1$ is one dimensional by Cor.~\ref{cor: only one imaginary eigenvalue implies partiall quad LF} the system is locally asymptotically stable iff there exists a partially quadratic LF for this system. 

For simplicity we will consider the case $g(z_1,z_2,z_3)=z_1^2 + z_2^\top z_2 + z_3^\top z_3$, $Q_1=-\begin{bmatrix}
	1& 0 \\ 0 & 1
\end{bmatrix}$ and $Q_2= -I \in \R^{n \times n}$. For this ODE, $d=8$, $R=0.05$ and $\eps=0.0001$ we solve Opts.~\eqref{opt: Full local stab SOS} and~\eqref{opt: reduced sos} for $n=1$ to $5$, finding a LF in each case. Fig.~\ref{fig: coupled linear ODE} shows how the number of decision variables grows for each problem. For $n=5$ it took Yalmip~\cite{lofberg2004yalmip} and Mosek {12645s} to solve Opt.~\eqref{opt: Full local stab SOS} and  10398s to solve Opt.~\eqref{opt: reduced sos}.
\end{ex}

\vspace{-0.25cm} \section{Conclusion} \label{sec:conclusion}
\vspace{-0.2cm}
We have proposed conditions for which there exists a partially quadratic LF that can certify the local asymptotic stability of nonlinear ODEs. The existence proof was non-constructive, relying on the existence of the center manifold. However, knowledge of the existence of partially quadratic LFs  allows us to tighten our search of LFs, providing computational savings. This paper opens up many directions of future work such as investigating the conditions for which there exists a SOS partially quadratic LF and the conditions under which the proposed methods can be extended to global stability analysis.
\vspace{-0.9cm}
\bibliographystyle{unsrt}
\bibliography{bib_cm}
\vspace{-0.35cm}
\appendix
\vspace{-0.25cm}
\begin{lem} \label{lem: block diagonal matrix}
	Suppose the matrix $A \in \R^{ n \times n}$ has distinct eigenvalues $\{ \lambda_1,...,\lambda_p \} \subset \mathbb{C}$ for some $1<p \le n$. If sets $S_1, S_2 \subset \mathbb{C}$ are such that $S_1 \cup S_2 = \{ \lambda_1,...,\lambda_p \}$, $S_1 \cap S_2 = \emptyset$, and if $\lambda \in S_i$ then $\bar{\lambda} \in S_i$ for $i=1,2$. Then there exists a non-singular matrix $T \in \R^{n \times n}$ such that
	\vspace{-0.2cm} \begin{align*}
		TAT^{-1}= \begin{bmatrix} A_1 &   0 \\ 0   & A_2 \end{bmatrix} \in \R^{n \times n},
	\end{align*}
	where the set of eigenvalues of $A_1 \in \R^{k \times k}$ is equal to $S_1$, the set of eigenvalues of $A_2 \in \R^{(n-k) \times (n-k)}$ is equal to $S_2$, $k=\sum_{\lambda \in S_1}Dim \bigg(Ker\bigg( (\lambda I - A)^{m(\lambda)} \bigg) \bigg)$ and $m(\lambda)$ is the algebraic multiplicity of eigenvalue $\lambda$.
\end{lem}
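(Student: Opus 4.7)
The plan is to build $T$ as a real change-of-basis matrix that aligns the coordinates of $\R^n$ with two $A$-invariant real subspaces corresponding to $S_1$ and $S_2$. The central tool is the generalized eigenspace decomposition of $\mathbb{C}^n$, together with the observation that conjugation-closure of each $S_i$ forces the associated complex subspace to be the complexification of a real subspace of equal dimension.

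First, for each distinct eigenvalue $\lambda_j$ I would define the generalized eigenspace $E_{\lambda_j} := \ker((\lambda_j I - A)^{m(\lambda_j)}) \subseteq \mathbb{C}^n$. Standard Jordan theory gives $\mathbb{C}^n = \bigoplus_{j=1}^p E_{\lambda_j}$, with each summand $A$-invariant and of complex dimension $m(\lambda_j)$. Grouping these according to the partition, I set $V_i := \bigoplus_{\lambda \in S_i} E_\lambda$, so that $V_i$ is $A$-invariant, $\mathbb{C}^n = V_1 \oplus V_2$, $\dim_{\mathbb{C}} V_1 = k$, and $\dim_{\mathbb{C}} V_2 = n-k$.

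Second, I would verify that each $V_i$ is invariant under complex conjugation. Because $A$ is real, $(\lambda I - A)^{m(\lambda)} v = 0$ implies $(\bar\lambda I - A)^{m(\lambda)} \bar v = 0$, and hence $\overline{E_\lambda} = E_{\bar\lambda}$; combined with the hypothesis that $S_i$ is closed under conjugation, this yields $\overline{V_i} = V_i$. I would then use the elementary fact that any conjugation-invariant complex subspace $V \subseteq \mathbb{C}^n$ coincides with the complexification of its real part $V \cap \R^n$ (writing $v = \tfrac{1}{2}(v+\bar v) + i \cdot \tfrac{1}{2i}(v-\bar v)$ exhibits each element as a combination of two real vectors that must themselves lie in $V$). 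Consequently the real subspaces $W_i := V_i \cap \R^n$ satisfy $\dim_\R W_i = \dim_{\mathbb{C}} V_i$ and $\R^n = W_1 \oplus W_2$; also, since $A$ is real and preserves $V_i$, it preserves $W_i$.

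Finally, I would select real bases of $W_1$ and $W_2$, concatenate them as the columns of a nonsingular real matrix $B$, and set $T := B^{-1}$. The invariance $A W_i \subseteq W_i$ immediately implies that $TAT^{-1}$ has the block form $\begin{bmatrix} A_1 & 0 \\ 0 & A_2 \end{bmatrix}$ with $A_1 \in \R^{k \times k}$ and $A_2 \in \R^{(n-k)\times(n-k)}$ representing $A\rvert_{W_1}$ and $A\rvert_{W_2}$ respectively. Since the spectrum of $A\rvert_{W_i}$ is unchanged under complexification, it equals the spectrum of $A$ acting on $V_i = \bigoplus_{\lambda \in S_i} E_\lambda$, which is precisely $S_i$. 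The main obstacle, and the only substantive content beyond bookkeeping, is the complex-to-real passage in the second step; once conjugation-invariance of the $V_i$ and the complexification identity are in hand, the rest of the construction is routine change-of-basis.
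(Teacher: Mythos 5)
Your proof is correct, but it follows a genuinely different route from the paper: the paper's entire proof is a one-line appeal to a textbook decomposition theorem (Theorem~4.2, p.~257 of Lang's \emph{Linear Algebra}), which is essentially the primary decomposition applied directly over $\R$ --- the conjugation-closure of each $S_i$ guarantees that the grouped factor $\prod_{\lambda\in S_i}(t-\lambda)^{m(\lambda)}$ of the characteristic polynomial has real coefficients, so the corresponding kernel is a real $A$-invariant subspace and no passage through $\mathbb{C}$ is needed. You instead work over $\mathbb{C}$ with the generalized eigenspace decomposition, observe that $\overline{E_\lambda}=E_{\bar\lambda}$ for real $A$, deduce $\overline{V_i}=V_i$ from conjugation-closure of $S_i$, and descend to $\R^n$ via the complexification identity $V_i=(V_i\cap\R^n)\oplus i(V_i\cap\R^n)$; the dimension count, the direct sum $\R^n=W_1\oplus W_2$, the invariance of the $W_i$, the change of basis, and the identification of the spectra of $A_1,A_2$ with $S_1,S_2$ are all handled correctly, including the fact that $k$ as defined in the lemma is exactly $\sum_{\lambda\in S_1}m(\lambda)=\dim_{\mathbb{C}}V_1$. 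What your argument buys is self-containedness --- the complex-to-real descent, which you rightly flag as the only substantive step, is made explicit rather than buried in a citation --- at the cost of being longer than the paper's citation-only proof; conversely, the paper's route avoids complexification entirely but leaves the real-coefficient argument implicit in the cited theorem's hypotheses.
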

\begin{proof} Apply Theorem~4.2 Page~257 from~\cite{lang1987linear}.
	\end{proof}

%\begin{lem} \label{lem: calculus identities}
%	Consider the functions $f_1(x) = x^\top b$ and $f_2(x)=x^\top P x$, where $b \in \R^n$ and $P \in \R^{n \times n}$. The following calculus identities hold,
%	\begin{align*}
%		\nabla f_1(x) & =b. \\
%		\nabla f_2(x) & = x^\top (P + P^\top).
%	\end{align*}
%Moreover, if $P$ is a symmetric matrix then $\nabla f_2(x)=2x^\top P$.
%\end{lem}

\begin{lem} \label{lem: Lip bound}
Consider $V \in C^1(\R^n,\R)$. Let $K:=\sup_{x \in B_R(0)} ||\nabla V(x)||_2< \infty$. Then
\vspace{-0.15cm} \begin{align} \label{eq: lip cts}
	||V(x)-V(y)||_2 \le K ||x-y||_2 \text{ for all } x,y \in B_R(0).
\end{align}
Furthermore, if $\nabla V(0)=0$ then for any $\delta>0$ there exists $R>0$ such that 
	\vspace{-0.2cm}\begin{align}  \label{eq: lip cts arbitrarily small}
	||V(x)-V(y)||_2 \le \delta ||x-y||_2 \text{ for all } x,y \in B_R(0).
\end{align}
\end{lem}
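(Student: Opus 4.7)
The plan is to derive the first bound from the fundamental theorem of calculus applied along a line segment, and to obtain the second from continuity of $\nabla V$ at the origin together with the hypothesis $\nabla V(0)=0$.

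For the first inequality, I would fix $x,y \in B_R(0)$ and use convexity of the Euclidean ball: the segment $\gamma(t) := y + t(x-y)$, $t \in [0,1]$, lies entirely in $B_R(0)$. Since $V \in C^1$, the chain rule gives $\frac{d}{dt}V(\gamma(t)) = \nabla V(\gamma(t))^\top (x-y)$, and integrating from $0$ to $1$ yields
\[ V(x) - V(y) = \int_0^1 \nabla V(\gamma(t))^\top (x-y)\, dt. \]
Taking absolute values, passing them inside the integral, applying the Cauchy--Schwarz inequality pointwise, and using the uniform bound $||\nabla V(\gamma(t))||_2 \le K$ on $[0,1]$ then yields $||V(x)-V(y)||_2 \le K ||x-y||_2$, which is Eq.~\eqref{eq: lip cts}.

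For the second statement, continuity of $\nabla V$ at the origin combined with $\nabla V(0)=0$ lets us shrink $K$ arbitrarily. Given $\delta>0$, I would choose $R>0$ so that $||\nabla V(z)||_2 \le \delta$ for every $z \in B_R(0)$; the supremum $K$ from the first part then satisfies $K \le \delta$, and applying Eq.~\eqref{eq: lip cts} on this ball immediately yields Eq.~\eqref{eq: lip cts arbitrarily small}. There is no substantive obstacle here: the argument is a routine application of the fundamental theorem of calculus along line segments combined with continuity of $\nabla V$. The only details worth confirming are that the segment stays in $B_R(0)$ (immediate from convexity of the Euclidean ball) and that the integrand $t\mapsto \nabla V(\gamma(t))^\top(x-y)$ is continuous on $[0,1]$ (immediate from $V \in C^1$ and continuity of $\gamma$), so the integral is well defined.
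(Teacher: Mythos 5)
Your proposal is correct and follows essentially the same route as the paper: restrict $V$ to the line segment joining $x$ and $y$ (which stays in $B_R(0)$ by convexity), bound the directional derivative by $K=\sup_{B_R(0)}\|\nabla V\|_2$, and for the second claim use continuity of $\nabla V$ together with $\nabla V(0)=0$ to make $K$ smaller than $\delta$ on a small enough ball. The only cosmetic difference is that you invoke the fundamental theorem of calculus with Cauchy--Schwarz where the paper uses the mean value theorem along the segment; the two are interchangeable here since $V$ is scalar-valued.
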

\begin{proof}
	By the Mean Value Theorem for any $x,y \in B_R(0)$ there exists $c \in (0,1)$ such that 
		\vspace{-0.2cm} \begin{align*}
		||V(x)-V(y)||_2  \le & ||\nabla V(cx + (1-c)y)||_2  ||x-y||_2.
	\end{align*}
Then letting $K:=\sup_{x \in B_R(0)} ||\nabla V(x)||_2$ it follows that $||\nabla V(cx + (1-c)y)||_2 \le K$ for all $x,y \in B_R(0)$ and $c \in (0,1)$. Hence, Eq.~\eqref{eq: lip cts} follows.

Now suppose $\nabla V(0)=0$. Since $V \in C^1(\R^n,\R)$ it follows that $F \in C(\R^n,\R)$, where $F(x):=||\nabla V(x)||_2$. Then for any $\delta>0$ there exists $R>0$ such that $|F(0)-F(x)|<\delta/2$ for all $||x||_2<R^2$. Thus it follows,
	\vspace{-0.2cm}	\begin{align*}
||\nabla V(x)||_2 < \delta /2 \text{ for all } x \in B_R(0),
\end{align*}
implying $K:=\sup_{x \in B_R(0)} ||\nabla V(x)||_2< \delta$. Hence, Eq.~\eqref{eq: lip cts arbitrarily small} holds.
	\end{proof}

\begin{thm}[The Center Manifold Theorem~\cite{Khalil_1996}] \label{thm: The Center Manifold Theorem} 
	Consider an ODE given by the Eqs.~\eqref{ODE: zero real part} and~\eqref{ODE: negative real part} where $\frac{\partial}{\partial x_i} g_j(0)=0$ for $i \in \{1,\dots,n\}$ and $j \in \{1,2\}$. There exists $R>0$ and a function $\eta: C^{\infty}(B_R(0), \R^{n-k})$ such that 
	\begin{itemize}
		%		\item If $z \in \{(x_1,x_2) \in \R^k \times \R^{n-k}: x_2 = \eta(x_1) \}$ then $[\phi_{\tilde{f}}(z,t)]_2 = \eta([\phi_{\tilde{f}}(z,t)]_1  )$ for all $t \ge 0$.
		\item The function $\eta$ is such that $\eta(0)=0$ and $\frac{\partial}{\partial y_i} \eta (0)=0$.
		\item The function $\eta$ solves the following system of PDEs,
	\end{itemize}
	\vspace{-0.2cm} \begin{align} \nonumber
	& A_2 \eta (y) + g_2(x,\eta(y)) - \nabla \eta(y)^\top (A_1 y + g_1(y, \eta(y)))=0 \\ \label{PDE: eta}
	& \hspace{1.5cm} \text{ for all } y \in \{x \in \R^{n-k} : ||x||_2<R\}.
\end{align}
\end{thm}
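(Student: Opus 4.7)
The plan is to prove the Center Manifold Theorem via Banach's fixed point theorem applied to a suitable integral operator, following the classical Carr/Kelley approach. Since a graph $\{(y,\eta(y))\}$ is invariant under the flow of Eqs.~\eqref{ODE: zero real part} and~\eqref{ODE: negative real part} precisely when $\eta$ solves PDE~\eqref{PDE: eta} (obtained by differentiating $z_2(t)=\eta(z_1(t))$ in $t$ along trajectories), the task reduces to constructing $\eta$ as a fixed point of an invariance operator and then extracting its smoothness and behaviour at the origin.

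First I would localize. Because we only need $\eta$ on $B_R(0)$, introduce a $C^\infty$ cutoff $\chi$ with $\chi\equiv 1$ on $B_R(0)$ and $\chi\equiv 0$ outside $B_{2R}(0)$, and replace $g_j$ by $\tilde g_j:=\chi\cdot g_j$ for $j\in\{1,2\}$. Since $g_j(0)=0$ and $\nabla g_j(0)=0$, Lem.~\ref{lem: Lip bound} guarantees that the global Lipschitz constants of $\tilde g_j$ on $\R^n$ can be made arbitrarily small by shrinking $R$. This ensures the modified center flow $\dot y=A_1 y+\tilde g_1(y,\eta(y))$ is globally defined with exponential growth rate arbitrarily close to zero. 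I would then work in the Banach space
\[
\mathcal{X}:=\{\eta\in C(\R^k,\R^{n-k}):\eta(0)=0,\;\norm{\eta}_\infty\le\rho,\;\mathrm{Lip}(\eta)\le L\}
\]
with the supremum norm, and define the candidate operator using the stable variation-of-constants formula,
\[
(\mathcal{T}\eta)(y):=-\int_{-\infty}^{0} e^{-A_2 s}\,\tilde g_2\bl(\psi_\eta(y,s),\,\eta(\psi_\eta(y,s))\br)\,ds,
\]
where $\psi_\eta(y,\cdot)$ is the flow of the modified center equation. Any fixed point $\eta=\mathcal{T}\eta$ has an invariant graph and therefore solves PDE~\eqref{PDE: eta}.

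The main obstacle is showing $\mathcal{T}$ is a self-map and contraction on $\mathcal{X}$. This requires weighing the exponential decay $\norm{e^{-A_2 s}}\le M e^{-\sigma s}$ (for $s\le 0$, with $\sigma>0$ the spectral gap of $A_2$) against the possible exponential growth of $\psi_\eta(y,s)$ as $s\to-\infty$. By choosing $R$ (and hence $\mathrm{Lip}(\tilde g_j)$) small enough, $\sigma$ strictly dominates this growth and the integrals defining $\mathcal{T}\eta$ and $\mathcal{T}\eta_1-\mathcal{T}\eta_2$ converge absolutely, with the contraction constant bounded strictly below one. Banach's theorem then yields a unique $\eta\in\mathcal{X}$.

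Finally, evaluating $\eta=\mathcal{T}\eta$ at $y=0$ gives $\eta(0)=0$ since $\tilde g_2(0,0)=0$; implicit differentiation of the fixed-point identity together with $\nabla\tilde g_j(0)=0$ yields $\nabla\eta(0)=0$. To upgrade from mere Lipschitz continuity to $C^\infty$ regularity, I would iterate the same contraction argument on Banach spaces carrying higher-order derivatives via the fiber contraction principle, exploiting smoothness of $\tilde g_j$; this is the other delicate technical step, since $\mathcal{T}$ does not automatically improve regularity.
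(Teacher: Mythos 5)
The paper itself offers no proof of Thm.~\ref{thm: The Center Manifold Theorem}: it is quoted from \cite{Khalil_1996}, so there is no internal argument to compare against. Your sketch is the classical Lyapunov--Perron construction (cutoff, contraction on a ball of Lipschitz graphs, fiber contraction for higher derivatives), which is indeed the standard route to this result, and the key quantitative steps are identified correctly: shrinking $R$ so that Lem.~\ref{lem: Lip bound} makes the Lipschitz constants of the cut-off nonlinearities small, and balancing the spectral gap $\sigma$ of $A_2$ against the (arbitrarily slow) exponential growth of the modified centre flow so that the Lyapunov--Perron integral converges and contracts. Two smaller points: the variation-of-constants representation of the stable component gives $(\mathcal{T}\eta)(y)=\int_{-\infty}^{0}e^{-A_2 s}\,\tilde g_2\bl(\psi_\eta(y,s),\eta(\psi_\eta(y,s))\br)\,ds$ without the leading minus sign; and invariance of the graph only yields PDE~\eqref{PDE: eta} after differentiability of $\eta$ has been established, so the regularity step must logically precede the derivation of the PDE rather than follow it.

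The genuine gap is the final regularity claim. The fiber-contraction argument yields a $C^k$ center manifold for each finite $k$, but the radius on which this holds in general shrinks with $k$, and it is a known fact that even a $C^\infty$ (indeed analytic) vector field need not admit a $C^\infty$ center manifold on any fixed neighborhood of the origin; hence ``iterating the contraction on spaces carrying higher-order derivatives'' cannot deliver the $\eta\in C^{\infty}(B_R(0),\R^{n-k})$ asserted in the statement. Moreover, under the paper's standing hypotheses the data are only $g_1,g_2\in C^1$, so your construction produces (and can only produce) a $C^1$ function $\eta$ with $\eta(0)=0$, $\nabla\eta(0)=0$ satisfying Eq.~\eqref{PDE: eta} --- which, incidentally, is all that the proof of Thm.~\ref{thm: seperable LF} actually uses. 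To close the argument you should either prove the $C^1$ (or finite-$k$, with $k$-dependent radius) version and note that it suffices for the rest of the paper, or acknowledge that the $C^\infty$ claim in the statement is stronger than what this method (or the cited reference) provides.
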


%Close to the origin the ODE evolves on its center manifold and reduces to a system of lower state space dimension. As stated next, the ODE is locally asymptotically stable if and only if the ``reduced" system is locally asymptotically stable.
\begin{prop}[\cite{Khalil_1996}]  \label{prop: Stability of reduced system} 
	The ODE given by the Eqs.~\eqref{ODE: zero real part} and~\eqref{ODE: negative real part} where $\frac{\partial}{\partial x_i} g_j(0)=0$ for $i \in \{1,\dots,n\}$ and $j \in \{1,2\}$ is locally asymptotically stable if and only if the following ODE is locally asymptotically stable,
	\begin{align} \label{ODE: reduced}
		\dot{z}_1(t)= A_1 z_1(t) + g_1(z_1(t), \eta(z_1(t))),
	\end{align}
	where $\eta$ solves the PDE~\eqref{PDE: eta} (known to exist by Theorem~\ref{thm: The Center Manifold Theorem}).
\end{prop}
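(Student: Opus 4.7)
The plan is to prove this as a standard consequence of the Center Manifold Theorem (Thm.~\ref{thm: The Center Manifold Theorem}), exploiting the fact that $\{(z_1,z_2): z_2=\eta(z_1)\}$ is an invariant manifold of the flow and that the transverse dynamics decays exponentially because $A_2$ is Hurwitz. The proof splits naturally into the two implications, with the reverse direction being substantially more delicate.

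\emph{Forward direction.} Assume the full ODE~\eqref{ODE: zero real part}--\eqref{ODE: negative real part} is locally asymptotically stable. The first step is to verify that the graph of $\eta$ is invariant: if $(z_1(t),z_2(t))$ solves the full ODE with $z_2(0)=\eta(z_1(0))$, then differentiating $z_2(t)-\eta(z_1(t))$ and using PDE~\eqref{PDE: eta} shows that $z_2(t)\equiv \eta(z_1(t))$ for all $t$ in a neighborhood of $0$. Consequently, any trajectory launched from the center manifold projects onto a trajectory of the reduced ODE~\eqref{ODE: reduced}. Local asymptotic stability of the full ODE, specialized to initial conditions on the manifold, then transfers directly to local asymptotic stability of~\eqref{ODE: reduced}.

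\emph{Reverse direction.} Assume~\eqref{ODE: reduced} is locally asymptotically stable. The key idea is the change of variables $y := z_2 - \eta(z_1)$, which measures the distance of a trajectory from the center manifold. Under this change, the dynamics become
\begin{align*}
\dot z_1 &= A_1 z_1 + g_1(z_1, y+\eta(z_1)),\\
\dot y &= A_2 y + N(z_1,y),
\end{align*}
where $N(z_1,y) := g_2(z_1,y+\eta(z_1)) - g_2(z_1,\eta(z_1)) - \nabla\eta(z_1)^\top[g_1(z_1,y+\eta(z_1)) - g_1(z_1,\eta(z_1))]$ after invoking PDE~\eqref{PDE: eta} to eliminate the $\eta$-terms. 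By Lem.~\ref{lem: Lip bound} and the facts $\nabla g_j(0)=0$, $\nabla\eta(0)=0$, the map $N$ satisfies $\|N(z_1,y)\|_2\le \epsilon\|y\|_2$ for $(z_1,y)$ in a small ball. The plan is then to construct a composite Lyapunov function $U(z_1,y) := W(z_1) + \rho\, y^\top P y$, where $W$ is any converse LF for the (assumed asymptotically stable) reduced ODE, $P>0$ solves $PA_2+A_2^\top P=-I$ (existence via Thm.~\ref{thm: LF linear systems}), and $\rho>0$ is chosen large enough. Differentiating $U$ along the transformed flow, using the reduced-ODE decrease of $W$ along $\dot z_1|_{y=0}$, the Lipschitz slack from Lem.~\ref{lem: Lip bound} to absorb the coupling term $W_{z_1}\cdot[g_1(z_1,y+\eta(z_1))-g_1(z_1,\eta(z_1))]$, and the $-\|y\|_2^2$ contribution from the quadratic part combined with the $\epsilon$-small bound on $N$, one obtains $\dot U<0$ in a punctured neighborhood of the origin, establishing local asymptotic stability of the $(z_1,y)$-system. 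Since $y=z_2-\eta(z_1)$ and $\eta$ is continuous with $\eta(0)=0$, stability in the new coordinates is equivalent to stability in the original coordinates.

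\emph{Main obstacle.} The forward direction is essentially a consequence of invariance and requires only routine verification using PDE~\eqref{PDE: eta}. The reverse direction is the real content: one must handle an arbitrary (possibly non-exponential) converse LF $W$ for the reduced ODE, so one cannot simply invoke a quadratic converse theorem in the center coordinate. The cross terms in $\dot U$ mix the reduced-ODE dissipation rate (measured by some class-$\mathcal{K}$-like function of $\|z_1\|_2$) with the quadratic dissipation in $y$, and the weighted-sum construction with judicious choice of $\rho$ together with the $\epsilon$-smallness of the Lipschitz constants of $g_1,g_2,\nabla\eta$ near the origin is what makes the argument close. Since this is the classical Khalil center manifold reduction theorem, the cleanest exposition is to cite the corresponding result from~\cite{Khalil_1996} after outlining the above structure.
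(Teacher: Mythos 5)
The paper offers no proof of this proposition at all --- it is imported verbatim from~\cite{Khalil_1996} (the classical center-manifold reduction principle), so your sketch is compared against that citation rather than against an in-paper argument. Your forward direction (invariance of the graph of $\eta$ via PDE~\eqref{PDE: eta} plus uniqueness of solutions, then restriction of the full flow to the manifold) is correct and routine, modulo the small remark that invariance is only local, which stability handles.

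The reverse direction, however, has a genuine gap: the composite Lyapunov function $U(z_1,y)=W(z_1)+\rho\, y^\top P y$ only closes if $W$ satisfies the \emph{quantitative} compatibility conditions of Eq.~\eqref{ass: W of reduced ODE}, namely $\nabla W^\top f\le -c_1\alpha(\|z_1\|_2)^2$ together with $\|\nabla W(z_1)\|_2\le c_2\alpha(\|z_1\|_2)$ for the \emph{same} $\alpha$. Without this, the cross term $\nabla W(z_1)^\top\bigl[g_1(z_1,y+\eta(z_1))-g_1(z_1,\eta(z_1))\bigr]$, which is only bounded by $\eps\|\nabla W(z_1)\|_2\|y\|_2$, cannot be absorbed by $-c_1\alpha^2-\rho\|y\|_2^2$: if, say, $\|\nabla W(z_1)\|_2\sim\|z_1\|_2$ while the dissipation is $\sim\|z_1\|_2^6$, then along $\|y\|_2=\|z_1\|_2^2$ the cross term dominates every negative term as $z_1\to0$, no matter how large $\rho$ is or how small the ball is taken. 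A Massera-type converse theorem for a merely asymptotically (not exponentially) stable reduced system does not deliver the required compatibility; that is precisely why the paper's Thm.~\ref{thm: seperable LF} carries Eq.~\eqref{ass: W of reduced ODE} as an explicit hypothesis and why Lem.~\ref{lem: conditions which ass holds} only verifies it in special cases. In effect your reverse direction reproves Thm.~\ref{thm: seperable LF}, not Prop.~\ref{prop: Stability of reduced system}. The standard proof of the reduction principle (Carr; the source Khalil relies on) is trajectory-based rather than Lyapunov-based: it combines exponential attraction to the center manifold, $\|z_2(t)-\eta(z_1(t))\|_2\le Ce^{-\gamma t}\|z_2(0)-\eta(z_1(0))\|_2$, with an asymptotic-phase (shadowing) property pairing each full trajectory with a reduced trajectory, and then transfers stability. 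If you want a self-contained proof you should follow that route; otherwise the honest move is the one the paper makes --- cite the result --- rather than present a Lyapunov sketch that only works under the extra hypothesis.
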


\end{document}